\newcommand{\Ext}{\mathrm{Ext}}
\newcommand{\sExt}{\underline{\mathrm{Ext}}}
\newcommand{\Hom}{\mathrm{Hom}}
\newcommand{\End}{\mathrm{End}}
\newcommand{\sHom}{\underline{\mathrm{Hom}}}
\newcommand{\sEnd}{\underline{\mathrm{End}}}
\newcommand{\ann}{\mathrm{ann}}
\newcommand{\sann}{\underline{\mathrm{ann}}}
\newcommand{\arann}{\mathrm{ARann}}
\newcommand\co{\mathrm{co}}
\newcommand{\m}{\mathfrak{m}}
\newcommand{\module}{\mathrm{mod}}
\newcommand{\MCM}{\mathrm{MCM}}
\newcommand{\add}{\mathrm{add}}
\newcommand{\CC}{\mathcal{C}}
\newcommand{\XX}{\mathcal{X}}
\newcommand{\EE}{\mathcal{E}}
\newcommand{\spec}{\mathsf{Spec}}
\newtheorem{theorem}{Theorem}[section]
\newtheorem*{theorem*}{Theorem}
\newtheorem{lemma}[theorem]{Lemma}
\newtheorem*{lemma*}{Lemma}
\newtheorem{proposition}[theorem]{Proposition}
\newtheorem*{proposition*}{Proposition}
\newtheorem{corollary}[theorem]{Corollary}
\newtheorem*{corollary*}{Corollary}
\newtheorem{question}[theorem]{Question}
\theoremstyle{definition}
\newtheorem{definition}[theorem]{Definition}
\newtheorem*{definition*}{Definition}
\newtheorem{example}[theorem]{Example}
\newtheorem{remark}[theorem]{Remark}
\newtheorem{chunk}[theorem]{}
\newtheorem{introtheorem}{Theorem}
\numberwithin{equation}{theorem}
\def\CC{\mathbb{C}}
\def\e{\boldsymbol{e}}
\def\smd{\operatorname{\mathsf{smd}}}
\def\X{\mathcal{X}}
\DeclareMathOperator{\tr}{\mathrm{tr}}
\newcommand{\Tr}{\mathrm{Tr}}
\newcommand{\nf}{\mathsf{NF}}
\title{Auslander-Reiten annihilators}
\author[\"{O}.~Esentepe]{\"{O}zg\"{u}r Esentepe}
\address{Institut für Mathematik und Wissenschaftliches Rechnen, Universität Graz,
Heinrichstraße 36, 8010 Graz, Austria}
\email{ozgur.esentepe@uni-graz.at}
\urladdr{https://www.sntp.ca}
\subjclass[2020]{13D22, 13C14}
\keywords{Auslander-Reiten conjecture, Cohen-Macaulay rings, stable category, maximal Cohen--Macaulay module, annihilators of Ext}
\begin{document}

\begin{abstract}
    The Auslander-Reiten Conjecture for commutative Noetherian rings predicts that a finitely generated module is projective when certain Ext-modules vanish. But what if those Ext-modules do not vanish? We study the annihilators of these Ext-modules and formulate a generalisation of the Auslander-Reiten Conjecture. We prove this general version for high syzygies of modules over several classes of rings including analytically unramified Arf rings, 2-dimensional local normal domains with rational singularities, Gorenstein isolated singularities of Krull dimension at least 2 and more. We also prove results for the special case of the canonical module of a Cohen-Macaulay local ring. These results both generalise and also provide evidence for a version of the Tachikawa Conjecture that was considered by Dao-Kobayashi-Takahashi.
\end{abstract}

\maketitle

\thispagestyle{empty}

\section{Introduction}

Representation theory of Artin algebras is a source of several long standing homological conjectures and it often helps fueling interesting questions in neighbouring areas. The \textit{Auslander-Reiten Conjecture} is one of the most celebrated examples of this. Motivated by a conjecture of Nakayama \cite{Nakayama} regarding the structure of the minimal injective resolution of a finite dimensional algebra, in \cite{Auslander-Reiten} Auslander and Reiten conjectured that over an Artin algebra, every indecomposable injective module occurs as a direct summand in a minimal injective resolution of the regular module. They proved in the same paper that this \textit{Generalized Nakayama Conjecture} is equivalent to the following: if $\Lambda$ is an Artin algebra and $M$ is a generator such that $\Ext_\Lambda^i(M,M) = 0$ for all $i \geq 1$, then $M$ is projective. This conjecture has made its way to the world of commutative algebra after a couple of decades and the following is now commonly referred to as the \textit{Auslander-Reiten Conjecture} by commutative algebraists.

\textbf{The Auslander-Reiten Conjecture.} Let $R$ be a commutative Noetherian local ring. If $\Ext_R^i(M, M \oplus R)  = $ for all $i \geq 1$, then $M$ is projective (equivalently, free).

It was Auslander, Ding and Solberg who first studied this problem in this context and proved that the conjecture holds for complete intersection rings \cite{Auslander-Ding-Solberg}. In \cite{Araya-Yoshino}, it was proved that the conjecture holds for all modules with finite complete intersection dimension (over any commutative Noetherian local ring). Huneke and Leuschke proved the conjecture for locally excellent Cohen-Macaulay normal rings containing the rational numbers in \cite{Huneke-Leuschke}. And over the last two decades, there have been major interest and improvements (including but not limited to \cite{Araya,Nasseh-Yoshino, Celikbas-Takahashi,Goto-Takahashi,Nasseh-Sather-Wagstaff,Celikbas-Iima-Sadeghi-Takahashi,Sadeghi-Takahashi,Nasseh-Takahashi,Kimura-Otake-Takahashi, Lindo,Avramov-Iyengar-Nasseh, Dey-Ghosh,Ghosh-Takahashi,Kimura-normal}) while the conjecture remains open even for Gorenstein rings. 

During these two decades, several generalizations of the Ext-vanishing condition of the Auslander-Reiten Conjecture were also introduced such as \textit{symmetric Auslander-Reiten condition, generalized Auslander-Reiten condition} or \textit{symmetric Auslander-Reiten condition for modules with constant rank} (see \cite{Huneke-Sega-Vraciu,Wei,Christensen-Holm,Celikbas-Takahashi, Diveris,Dey-Kumashiro-Parangama} and references within). Our goal in this paper is to introduce another generalization of the Auslander-Reiten Conjecture. We will do this by looking at two ideals associated to a finitely generated module over a commutative Noetherian local ring and comparing them.

Our ideals are annihilator ideals. More precisely, they are annihilators of (certain) Ext-modules. Annihilators of Ext modules have been of interest to the commutative algebra community for a very long time and recently there has been several papers connecting them to generation of module categories, derived categories or singularity categories and they have been proven to be useful to understand the singular loci of several classes commutative rings. We refer to \cite{Wang,Iyengar-Takahashi-decomp,Iyengar-Takahashi, Iyengar-Takahashi-jac,Dao-Kobayashi-Takahashi, Dey, Dey-Takahashi, Liu} among many others. Our goal is to connect the theory of these annihilators to the study of the Auslander-Reiten Conjecture.

\begin{chunk}
    \textbf{Stable annihilator ideals.}
For any two $R$-modules $X$ and $Y$, consider the submodule $P(X,Y)$ of $\Hom_R(X,Y)$ consisting of morphisms that factor through a projective module. The quotient is often denoted by $\sHom_R(X,Y)$. The category $\underline{\module}(R)$ whose objects are finitely generated $R$-modules and whose morphism sets are these \textit{stable} morphism sets is called the \textit{stable module category} of $R$. We define the \textit{stable annihilator} of $M$ as the annihilator of the stable endomorphism ring (i.e. the endomorphism ring in the stable module category) of $M$ and we denote it by $\sann_R (M)$. It is immediate from this definition that 
    \begin{align}\label{stable-annihilator-of-a-projective}
        \sann_R(M) = R \iff M \text{ is projective}.
    \end{align}
    We also have the following equalities:
    \begin{align*}
        \sann_R(M) = \ann_R \Ext_R^1(M, \Omega_R M) & = \bigcap_{i > 0 } \bigcap_{N \in \module R} \ann_R \Ext_R^i(M,N)
    \end{align*}
    where $\Omega_R(-)$ is the syzygy operator \cite[Lemma 2.2]{Dao-Kobayashi-Takahashi}. The vanishing locus of this ideal is precisely the non-free locus 
    \begin{align*}
        \nf(M) \colon \{ p \in \spec R \colon M_p \text{ is not free as an } R_p\text{-module}\}.
        \end{align*}
    of $M$.
\end{chunk}

\begin{chunk}
    \textbf{Auslander-Reiten annihilators.} The next ideal of our interest is the \textit{Auslander-Reiten annihilator} of $M$ which we introduce in this paper. We consider the ideal
    \begin{align*}
        \arann_R(M) := \bigcap_{i > 0}  \ann_R \Ext_R^i(M, M \oplus R)
    \end{align*}
    and with this notation, the Auslander-Reiten Conjecture can be restated as 
    \begin{align}\label{ar-annihilator-of-a-projective}
        \arann_R(M) = R \iff M \text{ is projective}.
    \end{align}
    Again, as the annihilators and Ext-modules behave nicely with localisation, the Auslander-Reiten conjecture implies that $\arann_R(M)$ should also define the non-free locus of $M$.
\end{chunk}

\begin{chunk}
    \textbf{Our motivating question.} 
    Comparing \ref{stable-annihilator-of-a-projective} and \ref{ar-annihilator-of-a-projective}, we see that whenever $R$ satisfies the Auslander-Reiten Conjecture, we have $\sann_R(M) = R$ if and only if $\arann_R(M) = R$. Moreover, by the ongoing discussion, we know that the Auslander-Reiten Conjecture holds for a commutative Noetherian local ring $R$ if and only if for any finitely generated $R$-module $M$, there is an equality $\sqrt{\sann_R(M)} = \sqrt{\arann_R(M)}$.

    Motivated by this, we are interested in the following question.
    \begin{question}\label{main-question-1}
        For an arbitrary finitely generated module $M$ over $R$, when do we have an equality
        \begin{align*}
            \sann_R (M) = \arann_R(M)?
        \end{align*}
    \end{question}
    Note that by the above discussion, we know that $\sann_R(M)$ is contained in $\ann_R\Ext_R^i(M, N)$ for any $i > 0$ and any $R$-module $N$. Thus, we always have an inclusion $\sann_R(M) \subseteq \arann_R(M)$. The equality would follow, for instance, by showing that for some $i > 0$, either $\ann_R\Ext_R^i(M,M)$ or $\ann_R\Ext_R^i(M,R)$ is contained in $\sann_R(M)$.

    Let us illustrate two main examples.
    \begin{example}\label{hypersurface-example}
        \textbf{Hypersurface rings.}
        Let $R$ be a hypersurface singularity and $M$ be a maximal Cohen-Macaulay $R$-module. Then, a minimal complete resolution of $M$ is 2-periodic \cite{Eisenbud} and we have isomorphisms
        \begin{align*}
            \sEnd_R(M) \cong \sExt_R^2(M,M) \cong \Ext_R^2(M,M)
        \end{align*}
        where $\sExt_R^i(-,*)$ denotes the \textit{stable Ext} or \textit{Tate cohomology} groups \cite{Buchweitz}. Thus, we have an equality
        \begin{align*}\
            \sann_R (M) = \ann_R\Ext_R^2(M,M)
        \end{align*}
        and consequently, we have $\sann_R(M) = \arann_R(M)$.
    \end{example}

    \begin{example}\label{nearly-gorenstein-example}
        \textbf{Nearly Gorenstein rings.}
        Let $(R, \m)$ be a Cohen-Macaulay local ring with a canonical module $\omega$. Assume that $R$ is \textit{nearly Gorenstein} but not Gorenstein which means that we have an equality $\tr(\omega) = \sann_R(\omega) = \m$ \cite{Herzog-Takayuki-Dumitru}. It is clear from the defining property that a nearly Gorenstein ring is Gorenstein on the punctured spectrum and therefore generically Gorenstein. Hence, in this case $R$ satisfies a particular case of the Auslander-Conjecture, namely the \textit{Tachikawa Conjecture} which states (in our notation) that $\arann_R(\omega) = R$ if and only if $R$ is Gorenstein \cite{Avramov-Buchweitz-Sega}. We assumed that $R$ is not Gorenstein and so $\arann_R(\omega) \neq R$ and we know that $\m = \sann_R(\omega) \subseteq \arann_R(\omega)$. Consequently, we obtain 
        \begin{align*}
            \tr(\omega) = \sann_R(\omega) = \arann_R(\omega) = \m.
        \end{align*}
        
    \end{example}
\end{chunk}

\begin{chunk}\label{introduction-the-category-e}
    \textbf{The category $\EE$.} In the next section, we study some basic properties of the full subcategory 
    \begin{align*}
        \EE(R) = \{ X \in \module R \colon \sann_R(X) = \arann_R(X) \} 
    \end{align*}
    of the module category. We note that if $\EE(R) = \module(R)$, then $R$ satisfies the Auslander-Reiten Conjecture. 
    
    By using standard techniques homological algebra, one can verify the Auslander-Reiten Conjecture for the class of modules with finite projective dimension. Indeed, we have $\Ext_R^{m}(M, R) \neq 0$ whenever the module $M$ has finite projective dimension $m \geq 1$. On the other hand, with the help of the functor $\Hom_R(M,-)$, the natural short exact sequence $ 0 \to \Omega M \to F \to M \to 0$ (where $F$ is a free module and $\Omega M$ is a syzygy of $M$) gives us a long exact sequence which yields isomorphisms $\Ext_R^i(M,M) \cong \Ext_R^i(\Omega M, \Omega M)$ for all $i > 0$ provided that $\Ext_R^i(M,R) = 0$ for all $i > 0$. Hence, if the condition $\Ext_R^i(M, M \oplus R) = 0$ is satisfied for all $i>0$, then the condition $\Ext_R^i(\Omega M, \Omega M \oplus R) = 0$ is also satisfied for all $i > 0$. Therefore, by induction, it is enough to consider the conjecture for (higher) syzygies. 
    
    Consequently, we have that the Auslander-Reiten Conjecture holds for the ring $R$ if we have an inclusion $\Omega_R^n(\module R) \subseteq \EE(R)$ for some $n \geq 0$. Thus, we believe that studying this category (and other related categories) is a useful thing to do. In the next section, we study properties of this category and prove such as being closed under direct sums, direct summands and syzygies. The following can be considered as the main theorem of Section 2 which is a combination of Lemma \ref{main-direct-sum-lemma} and Corollary \ref{smd-of-the-category-E}.
    \begin{introtheorem}
        Let $R$ be a commutative Noetherian local ring.
        \begin{enumerate}
            \item The category is $\EE(R)$ is closed under finite direct sums.
            \item We have $\smd(\EE(R)) = \module R$ where $\smd(\EE(R))$ is the smallest full subcategory that contains $\EE(R)$ and is closed under direct summands.
        \end{enumerate}
    \end{introtheorem}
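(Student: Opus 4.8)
The plan is to reduce both statements to the way $\sann_R$ and $\arann_R$ interact with finite direct sums, together with one monotonicity property of the stable annihilator under syzygies. The starting point is the pair of bookkeeping identities
\[
\sann_R(X\oplus Y)=\sann_R(X)\cap\sann_R(Y),
\]
\[
\arann_R(X\oplus Y)=\arann_R(X)\cap\arann_R(Y)\cap\bigcap_{i>0}\bigl(\ann_R\Ext_R^i(X,Y)\cap\ann_R\Ext_R^i(Y,X)\bigr),
\]
valid for all finitely generated $X,Y$. The first is immediate from the description $\sann_R(M)=\bigcap_{i>0}\bigcap_{N}\ann_R\Ext_R^i(M,N)$ together with additivity of $\Ext$ in each variable; the second comes from expanding $\Ext_R^i(X\oplus Y,\,X\oplus Y\oplus R)$ into its six summands and taking annihilators.

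For part (1) I would assume $X,Y\in\EE(R)$, so $\arann_R(X)=\sann_R(X)$ and $\arann_R(Y)=\sann_R(Y)$. Substituting into the second identity and using that $\sann_R(X)\subseteq\ann_R\Ext_R^i(X,Y)$ and $\sann_R(Y)\subseteq\ann_R\Ext_R^i(Y,X)$ for every $i>0$ (again by the intersection description of $\sann$), the two ``cross'' factors become redundant, leaving $\arann_R(X\oplus Y)=\sann_R(X)\cap\sann_R(Y)=\sann_R(X\oplus Y)$. Hence $X\oplus Y\in\EE(R)$, and an obvious induction handles arbitrary finite direct sums.

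For part (2), the extra input is that $\sann_R(M)\subseteq\sann_R(\Omega M)$ for every $M$, where $\Omega M$ denotes a syzygy of $M$: dimension shifting along $0\to\Omega M\to F\to M\to 0$ gives $\Ext_R^i(\Omega M,L)\cong\Ext_R^{i+1}(M,L)$ for all $i\ge 1$ and all $L$, so $\sann_R(\Omega M)=\bigcap_{i\ge 2}\bigcap_L\ann_R\Ext_R^i(M,L)\supseteq\sann_R(M)$. I then claim $M\oplus\Omega M\in\EE(R)$ for every finitely generated $M$. Indeed, by the direct-sum identity and this monotonicity, $\sann_R(M\oplus\Omega M)=\sann_R(M)\cap\sann_R(\Omega M)=\sann_R(M)$; and since $\Ext_R^1(M,\Omega M)$ is a direct summand of $\Ext_R^1(M\oplus\Omega M,\,M\oplus\Omega M\oplus R)$, we get $\arann_R(M\oplus\Omega M)\subseteq\ann_R\Ext_R^1(M,\Omega M)=\sann_R(M)$, using the known identity $\sann_R(M)=\ann_R\Ext_R^1(M,\Omega M)$. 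Combined with the always-valid inclusion $\sann_R\subseteq\arann_R$, this forces equality, so $M\oplus\Omega M\in\EE(R)$. Since every finitely generated module is a direct summand of such a module, and $\smd(\EE(R))\subseteq\module R$ is automatic, we conclude $\smd(\EE(R))=\module R$.

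None of this involves hard estimates: the computations are routine manipulations of $\Ext$ and annihilators. The only points that require a choice rather than mechanical unwinding are (a) recognising that the right module to test with in part (2) is $M\oplus\Omega M$ — so that the summand $\Ext_R^1(M,\Omega M)$ appears and its annihilator $\sann_R(M)$ bounds $\arann_R$ from above — rather than something like $M\oplus R$, which alters neither ideal; and (b) the monotonicity $\sann_R(M)\subseteq\sann_R(\Omega M)$, which is exactly what makes the extra factor $\sann_R(\Omega M)$ collapse. I expect (a) to be the genuine conceptual step and everything else to be formal.
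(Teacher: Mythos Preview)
Your proof is correct and follows essentially the same route as the paper: for (1) you expand $\arann_R(X\oplus Y)$ via additivity of $\Ext$ and absorb the cross terms using $\sann_R(X)\subseteq\e(X,Y)$, exactly as in the paper's Lemma; for (2) you prove $M\oplus\Omega M\in\EE(R)$ by extracting the summand $\Ext_R^1(M,\Omega M)$ and invoking $\sann_R(M)=\ann_R\Ext_R^1(M,\Omega M)$ together with $\sann_R(M)\subseteq\sann_R(\Omega M)$, which is precisely the paper's argument.
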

\end{chunk}
In Section 2, we also give examples of rings $R$ where, for some $n \geq 0$, $\Omega^n(\module R)$ is contained in $\EE(R)$ in Krull dimension 1 (Arf rings), 2 (rational singularities) and 3 (the (2-1)-scroll).

\begin{chunk}
    \textbf{Cohen-Macaulay rings.} In Section 3, we focus our attention to Cohen-Macaulay rings. We start with looking at one-dimensional Cohen-Macaulay local rings and see examples where we observe the inclusions 
    \begin{align*}
    \Omega(\module R) \subseteq \MCM(R) \subseteq \EE(R).
    \end{align*}
    See, for instance, Proposition \ref{far-flung-Gorenstein-nearly} for the case where $R$ is far-flung Gorenstein and nearly Gorenstein. After this, we utilise Auslander-Reiten theory for maximal Cohen-Macaulay modules which are free on the punctured spectrum. The following is the main theorem of Section 3.
    
    \begin{introtheorem}
        Let $R$ be a Gorenstein local ring of Krull dimension at least 2 and assume that $R$ is an isolated singularity. Then, for any maximal Cohen-Macaulay $R$-module $M$, we have $\arann_R(M) = \sann_R(M)$.
    \end{introtheorem}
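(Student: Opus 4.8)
The plan is to push the whole question into the triangulated category $\sMCM(R)\simeq\ds(R)$ and read the equality off Auslander--Reiten (Serre) duality. Write $d=\dim R$; note $d\ge 2$. Since $\sann_R(M)\subseteq\arann_R(M)$ always holds (the paragraph after Question~\ref{main-question-1}), only the reverse inclusion $\arann_R(M)\subseteq\sann_R(M)$ needs proof. First I would reduce to $R$ complete: the map $R\to\hat R$ is faithfully flat, $\Ext$ and stable $\Hom$ commute with completion, the finite-length modules $\underline{\End}_R(M)$ and $\Ext^{i}_R(M,M)$ $(i\ge1)$ are unchanged, and the hypotheses ``Gorenstein'', ``isolated singularity'' and ``maximal Cohen--Macaulay'' pass to $\hat R$ (the isolated-singularity part via excellence, or simply imposed), so $\sann_R(M)$ and $\arann_R(M)$ may be computed after completing. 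Thus we may assume $R$ is complete, in particular Henselian.

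Now I would assemble three ingredients. (i) Because $R$ is Gorenstein and $M$ is maximal Cohen--Macaulay, $\Ext^{i}_R(M,R)=0$ for every $i\ge1$, so that $\arann_R(M)=\bigcap_{i\ge1}\ann_R\Ext^{i}_R(M,M)$. (ii) $M$ is Gorenstein projective, hence has a complete resolution, and Tate cohomology agrees with ordinary Ext in positive degrees; thus for $i\ge1$
\[
\Ext^{i}_R(M,M)\;\cong\;\sExt^{i}_R(M,M)\;=\;\underline{\Hom}_R\big(M,\Omega_R^{-i}M\big),
\]
and each of these is a finite-length $R$-module since $M$ is locally free on the punctured spectrum. (iii) Auslander--Reiten duality over the complete Gorenstein isolated singularity $R$: the triangulated category $\sMCM(R)$, with suspension $\Omega_R^{-1}$, carries an $R$-linear Serre functor equal to $\Omega_R^{1-d}(-)$; equivalently, for maximal Cohen--Macaulay $X,Y$ there is a natural isomorphism
\[
\underline{\Hom}_R(X,Y)\;\cong\;\Hom_R\big(\underline{\Hom}_R(Y,\Omega_R^{1-d}X),\,E_R(R/\m)\big).
\]

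Granting these, the conclusion is immediate. Specializing (iii) to $X=Y=M$ and applying Matlis duality (an involution on finite-length modules) gives
\[
\sExt^{d-1}_R(M,M)\;=\;\underline{\Hom}_R\big(M,\Omega_R^{1-d}M\big)\;\cong\;\Hom_R\big(\underline{\End}_R(M),\,E_R(R/\m)\big),
\]
so $\ann_R\sExt^{d-1}_R(M,M)=\ann_R\underline{\End}_R(M)=\sann_R(M)$, since Matlis duality preserves annihilators. Because $d-1\ge1$, ingredient (ii) identifies $\sExt^{d-1}_R(M,M)$ with $\Ext^{d-1}_R(M,M)$, so this index contributes a factor to the intersection in (i); therefore
\[
\arann_R(M)\;\subseteq\;\ann_R\Ext^{d-1}_R(M,M)\;=\;\sann_R(M),
\]
as required.

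The main obstacle is ingredient (iii): pinning down Auslander--Reiten/Serre duality for $\sMCM(R)$ in exactly this $R$-linear (Matlis) form, and in particular that the Serre-functor shift equals $d-1$, so that it is $\sExt^{d-1}_R(M,M)$ — and not $\underline{\End}_R(M)$ itself — that is Matlis dual to $\underline{\End}_R(M)$. This is precisely where the hypothesis $d\ge2$ enters: only then does $d-1$ lie in the positive-degree range where $\sExt$ and $\Ext$ coincide, and hence genuinely occurs among the annihilators defining $\arann_R(M)$. The completeness reduction of the first step is routine but should be stated with some care regarding the descent of the isolated-singularity hypothesis.
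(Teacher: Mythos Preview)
Your proof is correct and follows essentially the same strategy as the paper: both arguments establish the key identity $\sann_R(M)=\ann_R\Ext_R^{d-1}(M,M)$ via Auslander--Reiten duality, and then observe that $\arann_R(M)\subseteq\ann_R\Ext_R^{d-1}(M,M)$ since $d-1\ge 1$.

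The packaging differs slightly. The paper works module-theoretically, using the formula $\ann_R\sHom_R(M,N)=\ann_R\Ext_R^{d+1}(\Tr M,DN)$ (its Proposition~\ref{ar-lemma-proposition}, obtained by combining $\ann_R L=\ann_R\Ext_R^d(L,\omega)$ for finite-length $L$ with the classical AR-duality formula), then specializes to $N=M$, uses $D=(-)^*$ and $M^*\cong\Omega^2\Tr M$ to reach $\Ext_R^{d-1}(M^*,M^*)$, and finally invokes the exact duality $(-)^*$ on MCM modules to pass to $\Ext_R^{d-1}(M,M)$. You instead phrase everything inside $\sMCM(R)$, invoking the $(d-1)$-Calabi--Yau property (Serre functor $\Omega_R^{1-d}$) and Matlis duality directly. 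These are two views of the same duality; your route is a bit more streamlined, while the paper's stays within the module-theoretic framework it has already developed. You are also more explicit about the reduction to the complete case, which the paper's Lemma~\ref{ar-duality} in fact requires but which is left implicit in its proof.
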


    We also use similar methods to provide a more general version of a theorem of Dao-Kobayashi-Takahashi, namely \cite[Theorem 4.4]{Dao-Kobayashi-Takahashi}. See Proposition \ref{radical-canonical-trace-proposition}, Corollary \ref{Gorenstein-order-corollary} and Theorem \ref{type-2-theorem}.

    \begin{introtheorem}
        Let $R$ be a Cohen-Macaulay local ring with canonical module $\omega$. Assume that one of the following conditions is satisfied.
        \begin{enumerate}
            \item $R$ is generically Gorenstein and the canonical trace ideal $\tr(\omega)$ is radical.
            \item $R$ is Gorenstein on the punctured spectrum and Henselian and is of Krull dimension at least 2 such that the endomorphism ring $\End_R(R \oplus \omega)$ is a Gorenstein $R$-order.
            \item $R$ is Gorenstein on the punctured spectrum, is of type 2 and is of Krull dimension at least 2.
        \end{enumerate}
        Then, we have an equality $\sann_R(\omega) = \arann_R(\omega)$. 
    \end{introtheorem}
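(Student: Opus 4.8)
The only inclusion that needs proof is $\arann_R(\omega)\subseteq\sann_R(\omega)$; the reverse holds for every finitely generated module, as noted after Question~\ref{main-question-1}. We may assume $R$ is not Gorenstein, since otherwise $\omega\cong R$ and both ideals equal $R$. Two reductions are available throughout. Because $\omega$ is a dualizing (hence maximal Cohen--Macaulay) module, the homothety map $R\to\End_R(\omega)$ is an isomorphism and $\Ext^i_R(\omega,\omega)=0$ for all $i>0$; therefore $\sEnd_R(\omega)=R/P(\omega,\omega)$, so that $\sann_R(\omega)=P(\omega,\omega)$ is the ideal of those $r\in R$ for which multiplication by $r$ on $\omega$ factors through a free module, while $\arann_R(\omega)=\bigcap_{i>0}\ann_R\Ext^i_R(\omega,R)$. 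Moreover, since $\Ext$ and annihilators are compatible with localization and $\omega_{\mathfrak p}$ is $R_{\mathfrak p}$-free exactly when $R_{\mathfrak p}$ is Gorenstein, the closed set $V(\sann_R(\omega))=\nf(\omega)$ is the non-Gorenstein locus of $R$, whereas $V(\arann_R(\omega))\subseteq\nf(\omega)$.

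For case~(1), since $R$ is generically Gorenstein we may realize $\omega$ as an ideal $I$ of $R$ containing a nonzerodivisor, and then $\Hom_R(I,R)$ is identified with the fractional ideal $I^{-1}=(R:_{Q(R)}I)$, where $Q(R)$ is the total quotient ring. A direct factorization argument then shows $\sann_R(\omega)=P(\omega,\omega)=I\cdot I^{-1}=\tr(\omega)$: given a presentation $r=\sum c_jd_j$ with $c_j\in I^{-1}$ and $d_j\in I$, the homothety $r$ on $I$ factors as $I\to R^n\to I$, $u\mapsto(c_ju)_j\mapsto\sum c_jud_j=ru$, and conversely any factorization of the homothety through a free module exhibits $r$ in exactly this form. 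It remains to prove $\arann_R(\omega)\subseteq\tr(\omega)$; since $\tr(\omega)$ is radical and cuts out the non-Gorenstein locus, this reduces to showing that $\arann_R(\omega)$ vanishes along that locus, i.e.\ to the Tachikawa-type non-vanishing $\Ext^i_{R_{\mathfrak p}}(\omega_{\mathfrak p},R_{\mathfrak p})\neq0$ for some $i>0$ at every non-Gorenstein prime $\mathfrak p$. This is carried out in Proposition~\ref{radical-canonical-trace-proposition}, and forces $\sann_R(\omega)=\arann_R(\omega)=\tr(\omega)$.

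Cases~(2) and~(3) require a different mechanism, because ``Gorenstein on the punctured spectrum'' makes both ideals $\mathfrak m$-primary, so agreement of radii is automatic and carries no information; one must produce a genuine containment of $\mathfrak m$-primary ideals, and the tool is Auslander--Reiten theory for the maximal Cohen--Macaulay module $\omega$, which is free on the punctured spectrum. In case~(2) one passes to the $R$-order $\Lambda=\End_R(R\oplus\omega)$: Henselianness supplies Krull--Schmidt and existence of almost split sequences, and the hypothesis that $\Lambda$ is a \emph{Gorenstein} order lets one transport the vanishing of $\Ext^{>0}_R(\omega,R)$ into a cohomological statement over $\Lambda$, where Gorenstein-ness of $\Lambda$ converts such vanishing into projectivity; descending the conclusion to $R$ yields $\arann_R(\omega)\subseteq\sann_R(\omega)$ (Corollary~\ref{Gorenstein-order-corollary}). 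In case~(3), being of type~$2$ means the minimal number of generators $\mu(\omega)$ equals $2$, so the first syzygy $\Omega\omega\subseteq R^2$ has rank one and is isomorphic to an ideal; the resulting concrete presentation of $\omega$ permits a hands-on computation of $\ann_R\Ext^1_R(\omega,R)$, which one checks lies inside $P(\omega,\omega)=\sann_R(\omega)$ (Theorem~\ref{type-2-theorem}).

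The recurring obstacle is the passage from the coarse data --- that $\sann_R(\omega)$ and $\arann_R(\omega)$ agree up to radical, equivalently have the same support --- to equality of the ideals themselves. In case~(1) this is exactly what radicality of $\tr(\omega)$ buys once $\sann_R(\omega)$ has been identified with $\tr(\omega)$ (together with the local non-vanishing flagged above, which is what pins down the support of $\arann_R(\omega)$). In cases~(2) and~(3) no radical is available, so the structural hypothesis --- a Gorenstein order, respectively type~$2$ --- must be used to manufacture the containment directly, via almost split sequences and explicit syzygies respectively; this is where the main work of the corresponding sections lies.
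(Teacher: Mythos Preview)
Your treatment of case~(1) is correct and matches the paper's Proposition~\ref{radical-canonical-trace-proposition}: identify $\sann_R(\omega)=\tr(\omega)$, use generic Gorensteinness (via the Tachikawa result of Avramov--Buchweitz--\v{S}ega) to get $\sqrt{\arann_R(\omega)}=\sqrt{\sann_R(\omega)}$, and then radicality of $\tr(\omega)$ closes the gap.

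Your sketches for cases~(2) and~(3), however, miss the actual mechanism and do not constitute proofs as written. The paper's argument in both cases hinges on a single identity coming from Auslander--Reiten duality (Lemma~\ref{ar-duality} and Proposition~\ref{ar-lemma-proposition} applied with $M=N=\omega$, so that $DN=R$):
\[
\sann_R(\omega)\;=\;\ann_R\Ext_R^{d+1}(\Tr\omega,R).
\]
Since $\omega^*=\Omega^2\Tr\omega$ up to free summands, this is $\ann_R\Ext_R^{d-1}(\omega^*,R)$. In case~(2) the Gorenstein-order hypothesis is used \emph{only} to invoke Stangle's theorem, which gives $\omega^*\cong\omega$; then
\[
\sann_R(\omega)=\ann_R\Ext_R^{d-1}(\omega^*,R)=\ann_R\Ext_R^{d-1}(\omega,R)\supseteq\arann_R(\omega).
\]
There is no passage to cohomology over $\Lambda$, no almost split sequences over $\Lambda$, and no ``vanishing of $\Ext^{>0}_R(\omega,R)$'' (none is assumed; the whole point is to compare annihilators of nonzero Ext-modules). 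Your description of case~(2) appears to be sketching a proof of the Tachikawa Conjecture rather than the annihilator equality.

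In case~(3), the type-$2$ hypothesis enters not through a hands-on computation of $\Ext_R^1(\omega,R)$ via a rank-one syzygy, but through \cite[Lemma~4.1]{Dao-Kobayashi-Takahashi}, which gives isomorphisms $\Ext_R^{i+1}(\Tr\omega,R)\cong\Ext_R^i(\omega,R)$ for all $i>0$. Combined with the displayed identity above this yields
\[
\sann_R(\omega)=\ann_R\Ext_R^{d}(\omega,R)\supseteq\arann_R(\omega),
\]
using $d\geq 2$ so that $d-1\geq 1$ (respectively $d\geq 1$). Your proposed route through $\ann_R\Ext_R^1(\omega,R)$ is not what is needed, and your outline gives no indication of how that annihilator would be shown to lie in $\sann_R(\omega)$. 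The common thread you should extract is: AR duality expresses $\sann_R(\omega)$ as the annihilator of a \emph{single} Ext-group $\Ext_R^{d+1}(\Tr\omega,R)$, and each structural hypothesis rewrites this as $\ann_R\Ext_R^j(\omega,R)$ for some $j\geq 1$, which then visibly contains $\arann_R(\omega)$.
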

\end{chunk}
 
\section{Preliminaries}
In this section, we are going to recall some background, set up conventions and notation and prove some preliminary results. Throughout this paper, we will always consider commutative Noetherian local rings and finitely generated modules over them. Hence, a module will always refer to a finitely generated module. Also, we will use free and projective interchangeably as we will be dealing with local rings.
\begin{chunk}
    \textbf{Some general definitions and conventions.} 
    Let us recall some definitions and facts that we will frequently use throughout this paper.
\begin{definition}\label{general-definitions}
    Let $R$ be a commutative Noetherian local ring and $M$ be an $R$-module. 
    \begin{enumerate}
        \item We denote by $(-)^*$ the functor $\Hom_R(-,R) \colon \module R \to \module R$. We call $M^*$ the $R$-\textit{dual} of $M$. A module $M$ is called \textit{reflexive} if the canonical map $M \to M^{**}$ is an isomorphism. 
        \item We denote by $D(-)$ the functor $\Hom_R(-,\omega)$ when $R$ is a Cohen-Macaulay local ring with canonical module $\omega$. It is a duality when restricted to maximal Cohen-Macaulay modules and we call $DM$ the \textit{canonical dual} of $M$.
        \item Given a surjection $F \xrightarrow{p} M \to 0$ with $F$ a free module, we call the kernel of $p$ a \textit{syzygy} of $M$ and denote it by $\Omega_R M$. This is uniquely defined up to projective summands. We drop the subscript when it is clear from the context.
        \item Given a free presentation $F \xrightarrow{d} G \to M \to 0$ of the module $M$, the cokernel of $d^*$ is called the \textit{Auslander transpose} of $M$ and we denote it by $\Tr M$. This is also uniquely defined up to projective summands. By definition, we have $M^* = \Omega^2 \Tr M $ up to projective summands. In particular, we have an isomorphism $\Ext_R^i(M^*, -) \cong \Ext_R^{i+2}(\Tr M, -)$ for any $i \geq 1$.
        \item If $\Ext_R^i(\Tr M, R) = 0$ for $i = 1, \ldots, n$, then $M$ is called $n$-\textit{torsionfree}. If $M$ is reflexive and we have $\Ext_R^i(M,R) = \Ext_R^i(M^*,R) = 0$ for all $i \geq 1$, then $M$ is called \textit{totally reflexive} (or sometimes called \textit{of Gorenstein dimension zero}). \cite{Auslander-Bridger}.
    \end{enumerate}
\end{definition}
\end{chunk}

\begin{chunk}
    \textbf{Stable annihilators.} We shall now recall stable annihilators and prove some preliminary results about them.
    \begin{definition}\label{stable-annihilator-definitions}
        Let $R$ be a commutative Noetherian local ring and $M, N$ be $R$-modules.
        \begin{enumerate}
            \item We denote by $P(M,N)$ the set of all morphisms from $M$ to $N$ that factors through a free $R$-module. This is a submodule of $\Hom_R(M,N)$. We denote the corresponding quotient module by $\sHom_R(M,N)$. When $M = N$, we use the notation $\sEnd_R(M)$.
            \item We denote by $\sann_R(M)$ the annihilator of $\sEnd_R(M)$ and call it the \textit{stable annihilator} of $M$. A ring element $r \in R$ belongs to $\sann_R M$ if and only if we have a commutative diagram
            \begin{align*}
                \xymatrix{
                M \ar^{r}[rr] \ar[dr]  && M \\
                &P \ar[ur]&
                }
            \end{align*}
            with $P$ a projective module.
        \end{enumerate}
        \end{definition}

        Let us record a few facts about stable annihilators some of which may be well-known.

\begin{lemma}\label{first-lemma-for-stable-annihilator}
    Let $R$ be a commutative Noetherian local ring and $M$ be an $R$-module.
    \begin{enumerate}
        \item \cite[Lemma 2.3]{Dao-Kobayashi-Takahashi} We have equalities
        \begin{align*}
            \sann_R (M) = \ann_R \Ext_R^1(M, \Omega M) = \bigcap_{i>0}\bigcap_{N \in \module R}  \ann_R \Ext_R^{i}(M,N). 
        \end{align*}
        \item We have an inclusion $\sann_R(M) \subseteq \sann_R(\Omega M)$.
        \item We have an inclusion $\sann_R(M) \subseteq \sann_R(M^*)$. In particular, we have an equality $\sann_R(M) = \sann_R(M^*)$ when $M$ is reflexive.
        \item For any $n \geq 3$, we have an inclusion 
        \begin{align*}
        \sann_R (M) \subseteq \ann_R \Ext_R^n(\Tr M, R).
        \end{align*}
        \item For any module $N$, we have $\sann_R(M \oplus N) = \sann_R(M) \cap \sann_R(N)$.
        \item If $I$ is an ideal of $R$, then $I \subseteq \sann_R(I)$. If $I$ is a regular trace ideal, then there is equality.
    \end{enumerate}
\end{lemma}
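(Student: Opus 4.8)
The plan is to deduce parts (2)--(6) from part (1), which we simply quote from \cite[Lemma 2.3]{Dao-Kobayashi-Takahashi}, using freely its two consequences: that $r \in \sann_R(M)$ if and only if $r\cdot 1_M$ factors through a free module, and that $\sann_R(M) = \bigcap_{i>0}\bigcap_{N \in \module R} \ann_R \Ext_R^i(M,N)$. For (2), I would apply $\Hom_R(-,N)$ to $0 \to \Omega M \to F \to M \to 0$ with $F$ free; since $\Ext_R^{\ge 1}(F,-)=0$, the long exact sequence yields isomorphisms $\Ext_R^i(\Omega M, N) \cong \Ext_R^{i+1}(M,N)$ for all $i \ge 1$ and all $N$, so each $\ann_R\Ext_R^i(\Omega M, N) = \ann_R\Ext_R^{i+1}(M,N)$ contains $\sann_R(M)$ by (1); intersecting over $i$ and $N$ gives $\sann_R(M) \subseteq \sann_R(\Omega M)$.

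For (3), given $r \in \sann_R(M)$ I would pick $M \xrightarrow{f} P \xrightarrow{g} M$ with $P$ free and $gf = r\cdot 1_M$; dualizing gives $M^* \xrightarrow{g^*} P^* \xrightarrow{f^*} M^*$ with $f^*g^* = r\cdot 1_{M^*}$ and $P^*$ free, so $r \in \sann_R(M^*)$. If $M$ is reflexive, applying this inclusion to $M^*$ gives $\sann_R(M^*) \subseteq \sann_R(M^{**}) = \sann_R(M)$, hence equality. For (4), I would use $M^* \cong \Omega^2 \Tr M$ up to free summands, so $\Ext_R^n(\Tr M, R) \cong \Ext_R^{n-2}(M^*,R)$ for $n \ge 3$; by (1) this is annihilated by $\sann_R(M^*)$, which contains $\sann_R(M)$ by (3), and chaining the inclusions is exactly the claim.

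For (5), note $1_{M \oplus N} = 1_M \oplus 1_N$: a factorization of $r\cdot 1_{M\oplus N}$ through a free module composes with the canonical inclusions and projections to give factorizations of $r\cdot 1_M$ and $r\cdot 1_N$, and conversely a pair of such factorizations assembles into one for the direct sum, so $\sann_R(M \oplus N) = \sann_R(M) \cap \sann_R(N)$. The inclusion $I \subseteq \sann_R(I)$ in (6) follows because, for $r \in I$, multiplication by $r$ on $I$ equals the composite $I \hookrightarrow R \xrightarrow{\cdot r} I$ (the image $rR$ lies in $I$ since $r \in I$), which passes through the free module $R$.

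The remaining assertion, $\sann_R(I) \subseteq I$ for a regular trace ideal $I$, is where the real work lies. Given $r \in \sann_R(I)$, write $r\cdot 1_I = \beta\alpha$ with $\alpha \colon I \to R^n$ and $\beta \colon R^n \to I$. Using that $I$ contains a nonzerodivisor, one identifies $\Hom_R(I,R)$ with $I^{-1} := (R :_Q I)$ inside the total quotient ring $Q$ of $R$; writing $\alpha$ in coordinates $(\alpha_1,\dots,\alpha_n)$ with $\alpha_j$ multiplication by $a_j \in I^{-1}$ and $\beta$ via $b_j := \beta(e_j) \in I$, one computes $r = \sum_j a_j b_j \in I^{-1}I = \tr_R(I)$, and $\tr_R(I) = I$ because $I$ is a trace ideal. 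I expect this last part to be the main obstacle, as it is the only step that leaves the purely formal homological framework: one must pass to the total quotient ring and invoke both $\tr_R(I) = I\,I^{-1}$ for regular ideals and the identity $\tr_R(I) = I$ characterising trace ideals.
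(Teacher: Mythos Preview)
Your argument is correct and tracks the paper's proof closely for parts (2)--(4): the paper also derives (2) from the identity $\sann_R(\Omega M)=\ann_R\Ext_R^2(M,\Omega^2 M)$ via (1), dualises a factorisation through a free module for (3), and chains $\sann_R(M)\subseteq\sann_R(M^*)\subseteq\ann_R\Ext_R^{n-2}(M^*,R)\cong\ann_R\Ext_R^n(\Tr M,R)$ for (4).

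The two places where you diverge are minor but worth noting. For (5), the paper argues abstractly: $\sHom_R(-,-)$ is biadditive, so $\sEnd_R(M\oplus N)$ decomposes into four pieces, and since $\sHom_R(M,N)$ is an $\sEnd_R(M)$-module its $R$-annihilator already contains $\sann_R(M)$; this absorbs the cross terms. Your direct manipulation of factorisations via the inclusions and projections is the same content unpacked. For (6), the paper simply cites the main theorem of \cite{Dey} for the equality $\sann_R(I)=I$ when $I$ is a regular trace ideal, whereas you supply the actual computation: identify $I^*\cong I^{-1}$ inside $Q(R)$, read off $r=\sum a_jb_j\in I^{-1}I=\tr_R(I)=I$ from the factorisation $r\cdot 1_I=\beta\alpha$. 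Your argument is exactly what Dey's theorem provides in this setting, so you have reproved the cited result rather than taken a different route; this makes your write-up more self-contained at the cost of a few extra lines.
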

\begin{proof}
    The second assertion follows from the first. Indeed, the first assertion says that for any $R$-module $N$ and positive integer $i$, there is an inclusion $\sann_R(M) \subseteq \ann_R\Ext_R^i(M,N)$. Then, the result follows from the fact that we have
    \begin{align*}
        \sann_R(\Omega M) = \ann_R \Ext_R^1(\Omega M, \Omega^2M) = \ann_R \Ext_R^2(M, \Omega^2 M).
    \end{align*}
    For the third assertion note that the following commutative diagram on the left gives rise to the commutative diagram on the right.

    \begin{minipage}{.5\textwidth}
        \begin{align*}
            \xymatrix{
            M \ar^{r}[rr] \ar[dr]_\alpha  && M \\
            &F \ar[ur]_\beta&
            }
        \end{align*}
    \end{minipage}%
    \begin{minipage}{.5\textwidth}
        \begin{align*}
            \xymatrix{
            M^* \ar^{r}[rr] \ar[dr]_{\beta ^*}  && M^* \\
            &F^* \cong F \ar[ur]_{\alpha ^*}&
            }
        \end{align*}
    \end{minipage}

    For the next assertion, we recall that $\Ext_R^n(\Tr M, R) \cong \Ext_R^{n-2}(M^*, R)$. Thus, 
    \begin{align*}
        \sann_R(M) \subseteq \sann_R(M^*) \subseteq \ann_R\Ext_R^{n-2}(M^*, R) = \ann_R \Ext_R^n(\Tr M, R).
    \end{align*}
    The fifth assertion follows from the fact that $\sHom(-,*)$ is additive on both components and $\sHom_R(M,N)$ is a module over $\sEnd_R(M)$.
    
    Finally, for the last assertion, we have the diagram
    \begin{align*}
        \xymatrix{
            I \ar^{r}[rr] \ar@{^{(}->}[dr]_{\iota}  && I \\
            &R \ar[ur]_r&
            }
    \end{align*}
    for every $r \in I$ and the second part follows from the main theorem of \cite{Dey}.
\end{proof}
\end{chunk}

\begin{chunk}
    \textbf{Auslander-Reiten annihilators.} Finally, let us introduce the Auslander-Reiten annihilators. We refer to the discussion in the Introduction for our motivation to consider this ideal.
    \begin{definition}\label{Auslander-Reiten-annihilator-definitions}
        Let $R$ be a commutative Noetherian local ring and $M, N$ be  $R$-modules.
            \begin{enumerate}
                \item We put 
                \begin{align*}
                \e(M,N) := \bigcap_{i > 0} \ann_R \Ext_R^i(M,N).
                \end{align*}
                \item We put $\arann_R(M) := \e(M,M \oplus R)$ and call it the \textit{Auslander-Reiten annihilator} of $M$.
            \end{enumerate}
    \end{definition}
    \begin{proposition}\label{first-proposition-for-auslander-reiten-annihilators}
        Let $R$ be a commutative Noetherian local ring and $M$ an $R$-module.
        \begin{enumerate}
            \item We have an inclusion $\sann_R(M) \subseteq \e(M,N)$ for any $R$-module $N$. In particular, we have $\sann_R(M) \subseteq \arann_R(M)$.
            \item We have an inclusion $\arann_R(M \oplus N) \subseteq \arann_R(M) \cap \arann_R(N)$.
            \item If $M$ is a totally-reflexive module, then there are equalities 
            \begin{align*}
                \arann_R(M) = \e(M,M) = \arann_R(M^*) = \arann_R(\Omega M).
            \end{align*}
            
            \item If there exists a short exact sequence 
            \begin{align*}
            0 \to \Omega M \to X \to M \to 0
            \end{align*}
            such that $\e(M,X) = R$ (for example if $M$ is totally reflexive and $X = R$), then we have $\arann_R(M) = \arann_R(\Omega M)$. 
        \end{enumerate}
    \end{proposition}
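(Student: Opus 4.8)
The plan is to dispatch (1) and (2) directly, to obtain (3) from a duality computation for totally reflexive modules, and to reduce (4) to a single containment involving $\Ext_R^1(M,R)$.

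\emph{Parts (1) and (2).} By Lemma~\ref{first-lemma-for-stable-annihilator}(1) we have $\sann_R(M)\subseteq\ann_R\Ext_R^i(M,N)$ for every $i>0$ and every $N$, so intersecting over $i$ gives $\sann_R(M)\subseteq\e(M,N)$; the case $N=M\oplus R$ is the assertion of (1). For (2), $\Ext_R^i(M\oplus N,M\oplus N\oplus R)$ has $\Ext_R^i(M,M\oplus R)$ as a direct summand, so $\ann_R\Ext_R^i(M\oplus N,M\oplus N\oplus R)\subseteq\ann_R\Ext_R^i(M,M\oplus R)$; intersecting over $i>0$ gives $\arann_R(M\oplus N)\subseteq\arann_R(M)$, and symmetrically for $N$.

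\emph{Part (3).} If $M$ is totally reflexive then $\Ext_R^i(M,R)=0$ for all $i\ge1$, hence $\e(M,R)=R$ and $\arann_R(M)=\e(M,M)\cap\e(M,R)=\e(M,M)$; the same holds for $M^{*}$ and for $\Omega M$, which are again totally reflexive. It remains to prove $\e(M,M)=\e(M^{*},M^{*})=\e(\Omega M,\Omega M)$. For the first I would take a free resolution $F_{\bullet}\to M$; reflexivity of $M$ gives natural isomorphisms $\Hom_R(F_j,M)\cong\Hom_R(M^{*},F_j^{*})$, while total reflexivity of $M$ makes the dualized complex $0\to M^{*}\to F_0^{*}\to F_1^{*}\to\cdots$ exact, so — using that $M^{*}$ is totally reflexive, i.e., that $\Ext_R^i(M^{*},P)=0$ for $i\ge1$ and $P$ free — a dimension shift identifies $H^i\Hom_R(M^{*},F_{\bullet}^{*})$ with $\Ext_R^i(M^{*},M^{*})$ for $i\ge1$, whence $\Ext_R^i(M,M)\cong\Ext_R^i(M^{*},M^{*})$ for $i\ge1$. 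For the second I would use $0\to\Omega M\to F\to M\to0$ with $F$ free: applying $\Hom_R(-,\Omega M)$ gives $\Ext_R^i(\Omega M,\Omega M)\cong\Ext_R^{i+1}(M,\Omega M)$ for $i\ge1$ (since $F$ is projective), while applying $\Hom_R(M,-)$ and using $\Ext_R^i(M,F)=0$ for $i\ge1$ gives $\Ext_R^{i+1}(M,\Omega M)\cong\Ext_R^i(M,M)$ for $i\ge1$. The remaining claim $\arann_R(\Omega M)=\arann_R(M)$ in (3) is then the special case $X=R^n$ (a free cover of $M$) of part (4), where $\e(M,X)=R$ holds because $M$ is totally reflexive.

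\emph{Part (4).} The hypothesis $\e(M,X)=R$ means exactly that $\Ext_R^i(M,X)=0$ for all $i\ge1$. Applying $\Hom_R(-,\Omega M)$ to the free cover $0\to\Omega M\to F\to M\to0$ gives $\Ext_R^i(\Omega M,\Omega M)\cong\Ext_R^{i+1}(M,\Omega M)$ for $i\ge1$, and applying $\Hom_R(M,-)$ to the given sequence $0\to\Omega M\to X\to M\to0$ and using $\Ext_R^i(M,X)=0$ for $i\ge1$ gives $\Ext_R^{i+1}(M,\Omega M)\cong\Ext_R^i(M,M)$ for $i\ge1$; hence $\e(\Omega M,\Omega M)=\e(M,M)$. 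Combined with the unconditional isomorphisms $\Ext_R^i(\Omega M,R)\cong\Ext_R^{i+1}(M,R)$ for $i\ge1$ (again from the free cover), a direct annihilator computation gives $\arann_R(M)=\arann_R(\Omega M)\cap\ann_R\Ext_R^1(M,R)$; in particular $\arann_R(M)\subseteq\arann_R(\Omega M)$, and the asserted equality is equivalent to the single containment $\arann_R(\Omega M)\subseteq\ann_R\Ext_R^1(M,R)$. This containment is the main obstacle. To attack it I would lift $\iota\colon\Omega M\hookrightarrow X$ through the surjection $F\to M$ — possible since $\Ext_R^1(M,X)=0$ — producing a commutative ladder between the two short exact sequences and hence an endomorphism $g$ of $M$ with $g^{*}=\Ext_R^1(g,R)$. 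Comparing the two connecting homomorphisms $\Hom_R(\Omega M,R)\to\Ext_R^1(M,R)$ shows that they differ by $g^{*}$; since one of them is surjective, $g^{*}$ is a surjective, hence bijective, endomorphism of the finitely generated module $\Ext_R^1(M,R)$, and this forces $\Ext_R^1(M,R)\cong\coker\bigl(\iota^{*}\colon\Hom_R(X,R)\to\Hom_R(\Omega M,R)\bigr)$. The delicate remaining point — the step I expect to work hardest on — is to show this cokernel is annihilated by $\arann_R(\Omega M)$; concretely, that for each $f\colon\Omega M\to R$ some $\arann_R(\Omega M)$-multiple of $f$ extends along $\iota$, which should follow by playing the vanishing of $\Ext_R^i(M,X)$ for $i\ge1$ against the defining properties of $\arann_R(\Omega M)$.
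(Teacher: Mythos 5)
Parts (1)--(3) of your argument are correct and essentially match the paper (part (3) is spelled out in more detail than the paper gives, via explicit dualization of a free resolution and a dimension shift, but it reaches the same isomorphisms $\Ext_R^i(M,M)\cong\Ext_R^i(M^*,M^*)\cong\Ext_R^i(\Omega M,\Omega M)$, and the observation that $\e(M,R)=R$ for $M$, $M^*$, $\Omega M$ totally reflexive reduces everything to $\e(M,M)$).

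Part (4) is where you have a genuine gap, and to your credit you flag it yourself. Your reduction is correct: the two long exact sequences give $\e(M,M)=\e(\Omega M,\Omega M)$, and together with $\Ext_R^i(\Omega M,R)\cong\Ext_R^{i+1}(M,R)$ this yields $\arann_R(M)=\arann_R(\Omega M)\cap\ann_R\Ext_R^1(M,R)$, so the entire content of (4) is the single containment $\arann_R(\Omega M)\subseteq\ann_R\Ext_R^1(M,R)$. The trouble is that the plan you sketch for this containment is circular. You correctly show that the connecting map $\delta_\beta\colon(\Omega M)^*\to\Ext_R^1(M,R)$ coming from $0\to\Omega M\xrightarrow{\iota}X\to M\to 0$ is surjective, so $\Ext_R^1(M,R)\cong\coker(\iota^*)$. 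But then ``for each $f\colon\Omega M\to R$ some $\arann_R(\Omega M)$-multiple of $f$ extends along $\iota$'' is, by $R$-linearity of $\delta_\beta$ and its surjectivity, word-for-word the same assertion as $\arann_R(\Omega M)\subseteq\ann_R\Ext_R^1(M,R)$; there is no extra leverage in the reformulation. Nor does ``playing $\Ext_R^i(M,X)=0$ against the defining properties of $\arann_R(\Omega M)$'' obviously help: the ideal $\arann_R(\Omega M)$ kills $\Ext_R^i(\Omega M,\Omega M\oplus R)$ for $i\geq 1$, which under the hypothesis controls $\Ext_R^{i}(M,M)$ for $i\geq 1$ and $\Ext_R^{j}(M,R)$ for $j\geq 2$, but gives no a priori information about $\Ext_R^1(M,R)$. (In the parenthetical case — $M$ totally reflexive, $X=R$ — there is no issue because $\Ext_R^1(M,R)=0$.) So your proof establishes $\arann_R(M)\subseteq\arann_R(\Omega M)$ but not the reverse inclusion, and as written the argument for (4) is incomplete. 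Note also that the paper's own proof of (4) records only the isomorphisms $\Ext_R^i(M,M)\cong\Ext_R^i(\Omega M,\Omega M)$ and then says ``the result follows''; it likewise does not display the inclusion $\arann_R(\Omega M)\subseteq\ann_R\Ext_R^1(M,R)$, so you have identified precisely the step that needs a real argument in the general (non-totally-reflexive) case.
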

    \begin{proof}
        The first assertion follows from the first assertion of Lemma \ref{first-lemma-for-stable-annihilator}. For the second one, we observe
        \begin{align}\label{auslander-reiten-annihilator-direct-sum}
            \arann_R(M \oplus N) = \e(M,N) \cap \arann_R(M) \cap \e(N,M) \cap \arann_R(N). 
        \end{align}
        The third assertion follows from the definition of a totally reflexive module, Lemma \ref{first-lemma-for-stable-annihilator}(3) and the fact that $\Ext_R^i(M, M) \cong \Ext_R^{i}(\Omega_R M, \Omega_R M)$ as discussed in \ref{introduction-the-category-e}.

        For the final assertion, we start by noting that the given short exact sequence induces a long exact sequence 
        \begin{align*}
        \cdots \to \Ext_R^i(M, X) \to \Ext_R^i(M,M) \to \Ext_R^{i+1}(M, \Omega M) \to \Ext_R^{i+1}(M,X) \to \cdots
        \end{align*}
        and by our assumption $\Ext^i(M,X) = 0$ for all $i > 0$. Thus, we get isomorphisms $\Ext_R^i(M,M) \cong \Ext_R^i(\Omega M, \Omega M)$ for all $i > 0$ and the result follows.
    \end{proof}
\end{chunk}

\begin{chunk}
    \textbf{The category $\EE$.} We shall now consider the full subcategory
    \begin{align*}
        \EE = \EE(R) = \{M \in \module R \colon \sann_R(M) = \arann_R(M) \}
    \end{align*}
    of the module category. Let us record some basic properties of $\EE$. We will make use of \ref{auslander-reiten-annihilator-direct-sum} and the inclusions 
    \begin{align}\label{inclusions-direct-sums-stable-annihilator-ar-annihilators}
    \sann_R(M) \cap \sann_R(N) = \sann_R(M \oplus N) \subseteq \arann_R(M \oplus N) \subseteq \arann_R(M) \cap \arann_R(N).
    \end{align}
    We start with an example/lemma.
    \begin{example}
        Let $(R, \m)$ be a commutative Noetherian local ring. If $R$ satisfies the Auslander-Reiten Conjecture, then $\m \in \EE$.
    \end{example}
    \begin{proof}
        If $\m$ is free as an $R$-module (e.g. $R$ is a discrete valuation ring), then $\sann_R(\m) = \arann_R(m) = R$ and so $\m \in \EE$. So, we can assume that $M$ is not free. We have $\m \subseteq \sann_R(\m)$ by Lemma \ref{first-lemma-for-stable-annihilator}(6). We always have $\sann_R(\m) \subseteq \arann_R(\m)$. If this was not an equality, we would have $\arann_R(\m) = R$ but this would be a counterexample for the Auslander-Reiten Conjecture.
    \end{proof}
    We shall record some basic properties of the category $\EE$.
    \begin{lemma}\label{main-direct-sum-lemma}
        Let $R$ be a commutative Noetherian local ring.
        \begin{enumerate}
            \item Then, $\EE$ is closed under finite direct sums.
            \item Let $M_1, \ldots, M_n$ be finitely generated $R$-modules and $a_1, \ldots, a_n$ be positive integers. Then, we have 
            \begin{align*}
                M_1^{\oplus a_1} \oplus \cdots \oplus M_n^{\oplus a_n} \in \EE \iff M_1 \oplus \cdots \oplus M_n \in \EE.
            \end{align*}
            \item Assume that there exists a short exact sequence 
            \begin{align*}
            0 \to \Omega M \to X \to M \to 0
            \end{align*}
            such that $\e(M,X) = R$, then $M \in \EE$ implies $\Omega M \in \EE$. In particular, $\EE$ is closed under syzygies when restricted to totally-reflexive modules.
        \end{enumerate}
    \end{lemma}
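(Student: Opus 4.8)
The plan is to establish the three parts in order, leaning on the key displayed inclusions \ref{inclusions-direct-sums-stable-annihilator-ar-annihilators} and the identity \ref{auslander-reiten-annihilator-direct-sum} that were set up just before the statement. For part (1), suppose $M, N \in \EE$, so that $\sann_R(M) = \arann_R(M)$ and $\sann_R(N) = \arann_R(N)$. Chasing the chain of inclusions \ref{inclusions-direct-sums-stable-annihilator-ar-annihilators}, the outer terms become equal, which forces every intermediate term to be equal; in particular $\sann_R(M \oplus N) = \arann_R(M \oplus N)$, i.e. $M \oplus N \in \EE$. Induction then gives closure under arbitrary finite direct sums. (I should also note the trivial base case that the zero module and free modules lie in $\EE$, since $\sann_R = \arann_R = R$ there, so the empty/one-term direct sum is handled.)

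For part (2), the forward direction is immediate from part (1) once we know $M_i \in \EE$ for each $i$, but that is \emph{not} assumed, so instead I would argue directly with annihilators. The point is that $\sann_R(M^{\oplus a}) = \sann_R(M)$ for any $a \geq 1$ by Lemma \ref{first-lemma-for-stable-annihilator}(5) (iterated), and likewise $\arann_R$ is insensitive to multiplicities: from \ref{auslander-reiten-annihilator-direct-sum} and the observation that $\e(M^{\oplus a}, N^{\oplus b}) = \e(M,N)$ (since $\Ext_R^i(M^{\oplus a}, N^{\oplus b}) \cong \Ext_R^i(M,N)^{\oplus ab}$ has the same annihilator as $\Ext_R^i(M,N)$), one gets $\arann_R\big(\bigoplus_i M_i^{\oplus a_i}\big) = \bigcap_{i,j} \e(M_i, M_j) \cap \bigcap_i \arann_R(M_i) = \arann_R\big(\bigoplus_i M_i\big)$, and similarly for $\sann_R$. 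Hence the defining equality holds for one side iff it holds for the other.

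For part (3), assume $M \in \EE$ and that there is a short exact sequence $0 \to \Omega M \to X \to M \to 0$ with $\e(M,X) = R$. By Proposition \ref{first-proposition-for-auslander-reiten-annihilators}(4) we get $\arann_R(M) = \arann_R(\Omega M)$, and by Lemma \ref{first-lemma-for-stable-annihilator}(2) we have $\sann_R(M) \subseteq \sann_R(\Omega M)$. Combining these with the always-true inclusion $\sann_R(\Omega M) \subseteq \arann_R(\Omega M)$ from Proposition \ref{first-proposition-for-auslander-reiten-annihilators}(1), and the hypothesis $\sann_R(M) = \arann_R(M)$, yields the sandwich
\[
\arann_R(\Omega M) = \arann_R(M) = \sann_R(M) \subseteq \sann_R(\Omega M) \subseteq \arann_R(\Omega M),
\]
so all terms coincide and $\Omega M \in \EE$. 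For the "in particular" clause, when $M$ is totally reflexive one takes the natural sequence $0 \to \Omega M \to F \to M \to 0$ with $F$ free, and $\e(M, F) = R$ because $\Ext_R^i(M, F) = 0$ for all $i > 0$ (this is exactly the totally-reflexive hypothesis on $M$), so the previous argument applies.

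I do not expect a serious obstacle here: every part is a formal manipulation of the containment $\sann_R \subseteq \arann_R$ together with the direct-sum and syzygy compatibilities already proved. The only point requiring a little care is part (2), where one must resist the temptation to invoke part (1) (the individual summands need not be in $\EE$) and instead verify the multiplicity-invariance of both ideals directly from their definitions via the isomorphisms on Ext.
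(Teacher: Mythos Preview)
Your proposal is correct and follows essentially the same approach as the paper: part (1) via the sandwich \ref{inclusions-direct-sums-stable-annihilator-ar-annihilators} (the paper phrases it through \ref{auslander-reiten-annihilator-direct-sum}, but this is the same content), part (2) via multiplicity-invariance of both $\sann_R$ and $\arann_R$ using additivity of Ext, and part (3) via the chain $\arann_R(\Omega M) = \arann_R(M) = \sann_R(M) \subseteq \sann_R(\Omega M) \subseteq \arann_R(\Omega M)$ coming from Proposition \ref{first-proposition-for-auslander-reiten-annihilators}(4) and Lemma \ref{first-lemma-for-stable-annihilator}(2). Your explicit caution in part (2) about not invoking part (1), and your spelling out of the totally-reflexive case in part (3), are welcome clarifications that the paper leaves implicit.
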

    \begin{proof}
        Assuming that $M, N \in \EE$, we have $\arann_R(M) = \sann_R(M) \subseteq \e(M,N)$ and also $\arann_R(N) = \sann_R(N) \subseteq \e(N,M)$. Thus, we can conclude by \ref{auslander-reiten-annihilator-direct-sum}, that we have an equality $\arann_R(M \oplus N) = \sann_R(M \oplus N)$. 
        
        For the second assertion, we note that there are equalities
        \begin{align*}
        \sann_R(M_1^{\oplus a_1} \oplus \cdots \oplus M_n^{\oplus a_n}) &= \sann_R(M_1^{\oplus a_1}) \cap \cdots \cap \sann_R(M_n^{\oplus a_n})\\ &= \sann_R(M_1) \cap \cdots \cap \sann_R(M_n) \\
        & = \sann_R(M_1 \oplus \cdots \oplus M_n) 
        \end{align*}
        and also by additivity of Ext we have the equality 
        \begin{align*}
        \arann_R(M_1^{\oplus a_1} \oplus \cdots \oplus M_n^{\oplus a_n}) = \arann_R(M_1 \oplus \cdots \oplus M_n)
        \end{align*}
        which gives us the desired result.

        The third assertion follows from Proposition \ref{first-proposition-for-auslander-reiten-annihilators}(4) by noting that the existence of such a short exact sequence for a module $M$ implies
        \begin{align*}
            \arann_R(\Omega M) = \arann_R(M) = \sann_R(M) \subseteq \sann_R(\Omega M)
        \end{align*}
        when $M \in \EE$.
    \end{proof}
    \begin{remark}
        In Lemma \ref{main-direct-sum-lemma}(3), we do not need to assume that our modules are totally reflexive. It is enough to assume that they are \textit{semi-Gorenstein projective} in the sense of \cite{ringel-zhang}. However, in several cases, a semi-Gorenstein projective module is indeed totally reflexive. Such cases are given in 
    \end{remark}
While we showed that $\EE$ is closed under direct sums, the next lemma is slightly more stronger.
    \begin{lemma}\label{another-lemma-on-direct-sum}
        Let $R$ be a commutative Noetherian local ring and $M,N$ be two $R$-modules such that $\sann_R(M) \subseteq \sann_R(N)$. If $M \in \EE$, then so is $M \oplus N$. In particular, if $M \in \EE$, then so is $M \oplus M^*$.
    \end{lemma}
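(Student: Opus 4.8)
The plan is to compute $\sann_R(M\oplus N)$ and $\arann_R(M\oplus N)$ separately and check that they agree, feeding off the chain \ref{inclusions-direct-sums-stable-annihilator-ar-annihilators} and the decomposition \ref{auslander-reiten-annihilator-direct-sum} that are already available. First I would use the hypothesis to collapse the stable side: by Lemma \ref{first-lemma-for-stable-annihilator}(5) together with $\sann_R(M)\subseteq\sann_R(N)$,
\[
\sann_R(M\oplus N)=\sann_R(M)\cap\sann_R(N)=\sann_R(M),
\]
and since $M\in\EE$ this common ideal is also $\arann_R(M)$.

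Next I would sandwich $\arann_R(M\oplus N)$ between $\sann_R(M)$ and itself. The inclusion $\arann_R(M\oplus N)\subseteq\arann_R(M)=\sann_R(M)$ is part of \ref{inclusions-direct-sums-stable-annihilator-ar-annihilators} (equivalently, Proposition \ref{first-proposition-for-auslander-reiten-annihilators}(2)). For the reverse inclusion I would appeal to the four-term description
\[
\arann_R(M\oplus N)=\e(M,N)\cap\arann_R(M)\cap\e(N,M)\cap\arann_R(N)
\]
from \ref{auslander-reiten-annihilator-direct-sum} and verify that $\sann_R(M)$ sits inside each factor. The containments $\sann_R(M)\subseteq\e(M,N)$ and $\sann_R(M)\subseteq\arann_R(M)$ are immediate from Lemma \ref{first-lemma-for-stable-annihilator}(1). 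For the two remaining factors the hypothesis enters: $\sann_R(M)\subseteq\sann_R(N)\subseteq\e(N,M)$ by Lemma \ref{first-lemma-for-stable-annihilator}(1) applied to $N$, and $\sann_R(M)\subseteq\sann_R(N)\subseteq\arann_R(N)$ by Proposition \ref{first-proposition-for-auslander-reiten-annihilators}(1). Putting these together gives $\sann_R(M)\subseteq\arann_R(M\oplus N)$, hence $\arann_R(M\oplus N)=\sann_R(M)=\sann_R(M\oplus N)$, i.e. $M\oplus N\in\EE$. For the final assertion I would simply take $N=M^{*}$: Lemma \ref{first-lemma-for-stable-annihilator}(3) supplies the required inclusion $\sann_R(M)\subseteq\sann_R(M^{*})$, so the first part applies with no further work.

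I do not expect a genuine obstacle here: all the needed pieces are already in place. The one spot deserving care is that \ref{auslander-reiten-annihilator-direct-sum} has four ``mixed'' terms and the hypothesis $\sann_R(M)\subseteq\sann_R(N)$ is asymmetric, so the terms $\e(N,M)$ and $\arann_R(N)$ cannot be bounded directly and must instead be routed through $\sann_R(N)$ — that is precisely where the asymmetric assumption is used, and it is worth making this explicit so the reader sees why it suffices.
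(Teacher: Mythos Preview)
Your proof is correct and follows the same route as the paper, which also collapses the chain \ref{inclusions-direct-sums-stable-annihilator-ar-annihilators} after noting $\sann_R(M\oplus N)=\sann_R(M)$ and $\arann_R(M)\cap\arann_R(N)\subseteq\arann_R(M)=\sann_R(M)$. Your detour through the four-term decomposition \ref{auslander-reiten-annihilator-direct-sum} for the reverse inclusion is valid but unnecessary: once $\sann_R(M\oplus N)=\sann_R(M)$ is established, the inclusion $\sann_R(M)\subseteq\arann_R(M\oplus N)$ is already the left half of \ref{inclusions-direct-sums-stable-annihilator-ar-annihilators}, so no appeal to the mixed terms $\e(M,N)$, $\e(N,M)$ is needed.
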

    \begin{proof}
        We have $\sann_R(M) = \sann_R(M) \cap \sann_R(N) = \sann_R(M\oplus N)$ by Lemma \ref{first-lemma-for-stable-annihilator}(5) and also $\arann_R(M) \cap \arann_R(N) \subseteq \arann_R(M) = \sann_R(M) $ by assumption. Thus, the inclusions in \ref{inclusions-direct-sums-stable-annihilator-ar-annihilators} become equalities.
    \end{proof}

    \begin{remark}\label{minimal-stable-annihilator}
        Let $R$ be a Cohen-Macaulay local ring with an isolated singularity. Then, there exists a maximal Cohen-Macaulay $R$-module $M_0$ such that for any other maximal Cohen-Macaulay module $M$, we have $\sann_R(M_0) \subseteq \sann_R(M)$ \cite[Theorem 1.2]{Kimura}. Therefore, our lemma tells us that $M_0 \in \EE$ implies $M \oplus M_0 \in \EE$ for any $M \in \MCM(R)$.
    \end{remark}

    \begin{proposition}
        Let $R$ be a Cohen-Macaulay local ring with canonical module $\omega$ and assume that $\omega \in \EE$.
        \begin{enumerate}
            \item For any maximal Cohen-Macaulay module $M$, we have $\arann_R(M \oplus \omega) =  \arann_R(M) \cap \arann_R(\omega)$.
            \item If $M \oplus \omega \in \EE$ for some maximal Cohen-Macaulay $R$-module $M$, then we have an equality $\sann_R(\omega) \cap \sann_R(M) = \sann_R(\omega) \cap \arann_R(M)$. If, moreover, there is an inclusion $\arann_R(M) \subseteq \sann_R(\omega) $, then we have $M \in \EE$.
        \end{enumerate}
        
    \end{proposition}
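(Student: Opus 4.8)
The plan is to deduce both parts from the direct-sum identity \ref{auslander-reiten-annihilator-direct-sum}, the additivity of stable annihilators (Lemma \ref{first-lemma-for-stable-annihilator}(5)), the standing hypothesis $\sann_R(\omega)=\arann_R(\omega)$, and the vanishing $\Ext_R^i(M,\omega)=0$ for $i>0$ that holds for every maximal Cohen-Macaulay module $M$ over a Cohen-Macaulay local ring with canonical module $\omega$.

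For part (1) I would start from
\begin{align*}
\arann_R(M\oplus\omega)=\e(M,\omega)\cap\arann_R(M)\cap\e(\omega,M)\cap\arann_R(\omega),
\end{align*}
which is \ref{auslander-reiten-annihilator-direct-sum}. The first factor is all of $R$ because $M$ is maximal Cohen-Macaulay, so $\e(M,\omega)=R$. For the third factor, Lemma \ref{first-lemma-for-stable-annihilator}(1) gives $\sann_R(\omega)\subseteq\ann_R\Ext_R^i(\omega,M)$ for every $i>0$, hence $\sann_R(\omega)\subseteq\e(\omega,M)$; since $\sann_R(\omega)=\arann_R(\omega)$ by hypothesis, we get $\arann_R(M)\cap\arann_R(\omega)\subseteq\e(\omega,M)$, so the four-fold intersection collapses to $\arann_R(M)\cap\arann_R(\omega)$, as desired. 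This is the one place where the assumption $\omega\in\EE$ is actually used in part (1).

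For part (2), assume $M\oplus\omega\in\EE$. Chaining Lemma \ref{first-lemma-for-stable-annihilator}(5), the assumption $M\oplus\omega\in\EE$, part (1), and the hypothesis $\arann_R(\omega)=\sann_R(\omega)$ yields
\begin{align*}
\sann_R(M)\cap\sann_R(\omega)&=\sann_R(M\oplus\omega)=\arann_R(M\oplus\omega)\\
&=\arann_R(M)\cap\arann_R(\omega)=\arann_R(M)\cap\sann_R(\omega),
\end{align*}
which is the first assertion. For the last claim, the extra inclusion $\arann_R(M)\subseteq\sann_R(\omega)$ rewrites the right-hand side as $\arann_R(M)$, so $\arann_R(M)=\sann_R(M)\cap\sann_R(\omega)\subseteq\sann_R(M)$; together with the automatic inclusion $\sann_R(M)\subseteq\arann_R(M)$ of Proposition \ref{first-proposition-for-auslander-reiten-annihilators}(1) this forces $\sann_R(M)=\arann_R(M)$, i.e. $M\in\EE$.

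The argument is essentially formal, so I do not expect a serious obstacle; the only non-routine ingredient is the Ext-vanishing $\Ext_R^i(M,\omega)=0$ for $i>0$ with $M$ maximal Cohen-Macaulay, a standard property of the canonical module. The subtlest bookkeeping point is part (1): one must notice that the factor $\e(\omega,M)$, which a priori could be a proper ideal, does not shrink the intersection, and this is precisely where the hypothesis $\sann_R(\omega)=\arann_R(\omega)$ is invoked.
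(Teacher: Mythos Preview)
Your proof is correct and follows essentially the same approach as the paper: both use the direct-sum identity \ref{auslander-reiten-annihilator-direct-sum}, the vanishing $\e(M,\omega)=R$ for $M$ maximal Cohen-Macaulay, and the inclusion $\arann_R(\omega)=\sann_R(\omega)\subseteq\e(\omega,M)$ to collapse the four-fold intersection, and then chain the inclusions in \ref{inclusions-direct-sums-stable-annihilator-ar-annihilators} for part (2). Your write-up is in fact slightly more explicit than the paper's, which omits the final step of part (2) and contains a small slip in the displayed conclusion of part (1).
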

    \begin{proof}
        For the first assertion, assume $M$ is a maximal Cohen-Macaulay $R$-module. The canonical module is an injective object in the category of maximal Cohen-Macaulay modules. Therefore, we have $\arann_R(M) \subseteq R = \e(M, \omega)$. By assumption, we have $\omega \in \EE$ and thus, we have $\sann_R(\omega) = \arann_R(\omega) \subseteq \e(\omega, M)$. Hence, by \ref{auslander-reiten-annihilator-direct-sum}, we have the equality $\sann_R(M \oplus \omega)= \arann_R(M \oplus \omega)$.

        For the second assertion, we use the inclusions in \ref{inclusions-direct-sums-stable-annihilator-ar-annihilators}. The first inclusion is an equality by our assumption and the second inclusion is an equality by the first assertion. The result follows. 
    \end{proof}
    We continue this section with a discussion on the direct sum of a module and its syzygy. As stated in Lemma \ref{first-lemma-for-stable-annihilator}(1), the stable annihilator of $M$ equals the annihilator of $\Ext_R^1(M, \Omega M)$. This module is a direct summand of $\Ext_R^1(M \oplus \Omega M, M \oplus \Omega M)$ and the next proposition relies on this observation.

    \begin{proposition}\label{direct-sum-of-a-module-and-its-first-syzygy}
        Let $R$ be a commutative Noetherian local ring and $M$ be any finitely generated $R$-module. Then, we have $M \oplus \Omega M \in \EE$.
    \end{proposition}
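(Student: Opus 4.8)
The goal is to show $\sann_R(M \oplus \Omega M) = \arann_R(M \oplus \Omega M)$. We already have the inclusion $\subseteq$ from Proposition \ref{first-proposition-for-auslander-reiten-annihilators}(1), so the content is the reverse inclusion, and by the decomposition
\begin{align*}
\arann_R(M \oplus \Omega M) = \e(M, \Omega M) \cap \arann_R(M) \cap \e(\Omega M, M) \cap \arann_R(\Omega M)
\end{align*}
coming from \ref{auslander-reiten-annihilator-direct-sum}, together with Lemma \ref{first-lemma-for-stable-annihilator}(5) which gives $\sann_R(M \oplus \Omega M) = \sann_R(M) \cap \sann_R(\Omega M)$, it suffices to show that $\arann_R(M \oplus \Omega M)$ is contained in $\sann_R(M) \cap \sann_R(\Omega M)$. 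The plan is to exploit the hint in the paragraph preceding the statement: $\sann_R(M) = \ann_R \Ext_R^1(M, \Omega M)$ by Lemma \ref{first-lemma-for-stable-annihilator}(1), and $\Ext_R^1(M, \Omega M)$ is a direct summand of $\Ext_R^1(M \oplus \Omega M, M \oplus \Omega M)$; hence $\arann_R(M \oplus \Omega M) \subseteq \ann_R \Ext_R^1(M\oplus\Omega M, M\oplus\Omega M) \subseteq \ann_R \Ext_R^1(M,\Omega M) = \sann_R(M)$.

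So the first step gives $\arann_R(M \oplus \Omega M) \subseteq \sann_R(M)$ cleanly. The second step is to get $\arann_R(M \oplus \Omega M) \subseteq \sann_R(\Omega M)$. Here I would repeat the same trick one syzygy down: $\sann_R(\Omega M) = \ann_R \Ext_R^1(\Omega M, \Omega^2 M)$, and $\Ext_R^1(\Omega M, \Omega^2 M) \cong \Ext_R^2(M, \Omega^2 M)$ (as recorded in the proof of Lemma \ref{first-lemma-for-stable-annihilator}(2)). Now I want to see $\Ext_R^2(M, \Omega^2 M)$ controlled by the Ext-modules appearing in $\arann_R(M \oplus \Omega M)$. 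Applying $\Hom_R(M, -)$ to $0 \to \Omega^2 M \to F \to \Omega M \to 0$ gives an exact sequence $\Ext_R^1(M, \Omega M) \to \Ext_R^2(M, \Omega^2 M) \to \Ext_R^2(M, F)$; since $F$ is free, $\Ext_R^2(M,F)$ is killed by $\e(M,R) \supseteq \arann_R(M \oplus \Omega M)$, and $\Ext_R^1(M,\Omega M)$ is killed by $\sann_R(M) \supseteq \arann_R(M\oplus\Omega M)$ from Step 1; hence the product (indeed the intersection) $\arann_R(M \oplus \Omega M)$ kills $\Ext_R^2(M, \Omega^2 M)$, i.e. $\arann_R(M\oplus\Omega M) \subseteq \ann_R \Ext_R^2(M,\Omega^2 M) = \sann_R(\Omega M)$.

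Combining the two steps yields $\arann_R(M \oplus \Omega M) \subseteq \sann_R(M) \cap \sann_R(\Omega M) = \sann_R(M \oplus \Omega M)$, and with the always-true reverse inclusion we are done. The only mild subtlety — the step I would be most careful about — is the bookkeeping in Step 2: making sure that "annihilated by $I$" and "annihilated by $J$" on the two ends of a three-term exact sequence forces "annihilated by $IJ$" (or at least by $I \cap J$) on the middle term, and that the ideals $I = \sann_R(M)$ and $J = \e(M,R)$ are both genuinely above $\arann_R(M \oplus \Omega M)$ — the former via Step 1, the latter directly from the definition of $\arann$ since $R$ is a summand of $M \oplus \Omega M \oplus R$. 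Everything else is a direct summand argument plus the standard long exact sequence, with no ring-theoretic hypotheses needed beyond Noetherian local, which matches the generality of the statement.
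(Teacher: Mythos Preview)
Your Step 1 is exactly the paper's argument and already finishes the proof. The paper observes that $\sann_R(M) \subseteq \sann_R(\Omega M)$ (Lemma \ref{first-lemma-for-stable-annihilator}(2)), so $\sann_R(M \oplus \Omega M) = \sann_R(M) \cap \sann_R(\Omega M) = \sann_R(M)$; combined with your Step 1 chain $\arann_R(M \oplus \Omega M) \subseteq \ann_R\Ext_R^1(M,\Omega M) = \sann_R(M)$, this gives the result in one line.

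Your Step 2 is therefore unnecessary, and in fact the version you wrote has a small gap: for a three-term exact sequence $A \to B \to C$ with $I$ killing $A$ and $J$ killing $C$, one only gets that the \emph{product} $IJ$ kills $B$, not the intersection $I \cap J$. With $I = \sann_R(M)$ and $J = \e(M,R)$ both containing $\arann_R(M\oplus\Omega M)$, your argument as written yields only $(\arann_R(M\oplus\Omega M))^2 \subseteq \sann_R(\Omega M)$. The clean fix is just to delete Step 2 and invoke $\sann_R(M) \subseteq \sann_R(\Omega M)$ after Step 1, exactly as the paper does.
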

    \begin{proof}
        We know that we always have $\sann_R(M \oplus \Omega M) \subseteq \arann_R(M \oplus \Omega M)$. So, we should show that $\arann_R(M \oplus \Omega) \subseteq \sann_R(M \oplus \Omega M)$. This follows from 
        \begin{align*}
        \arann_R(M \oplus \Omega M) \subseteq \ann_R \Ext_R^1(M \oplus \Omega M, M \oplus \Omega M) \subseteq \ann_R\Ext_R^1(M , \Omega M) = \sann_R(M)
        \end{align*}
        and the fact that $\sann_R(M) = \sann_R(M \oplus \Omega M) $.
    \end{proof}
    Therefore, when we take the smallest subcategory $\smd(\EE)$ that contains $\EE$ and is closed under direct summands, we get the entire module category.
    \begin{corollary}\label{smd-of-the-category-E}
        For any commutative Noetherian local ring $R$, we have $\smd(\EE) = \module R$.
    \end{corollary}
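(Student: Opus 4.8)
The plan is to deduce this immediately from Proposition \ref{direct-sum-of-a-module-and-its-first-syzygy}. Recall that $\smd(\EE)$ denotes the smallest full subcategory of $\module R$ that contains $\EE$ and is closed under taking direct summands; in particular $\smd(\EE) \subseteq \module R$ trivially, so it suffices to establish the reverse inclusion.

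First I would take an arbitrary finitely generated $R$-module $M$ and form $M \oplus \Omega M$, where $\Omega M$ is any syzygy of $M$. By Proposition \ref{direct-sum-of-a-module-and-its-first-syzygy} we have $M \oplus \Omega M \in \EE$. Since $M$ is (up to isomorphism) a direct summand of $M \oplus \Omega M$, and $\smd(\EE)$ is closed under direct summands and contains $\EE$, we conclude $M \in \smd(\EE)$. As $M$ was arbitrary, this gives $\module R \subseteq \smd(\EE)$, and combined with the trivial inclusion we obtain the equality $\smd(\EE) = \module R$.

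There is no real obstacle here: the entire content has already been isolated in Proposition \ref{direct-sum-of-a-module-and-its-first-syzygy}, whose proof rests on the observation that $\Ext_R^1(M,\Omega M)$ is a direct summand of $\Ext_R^1(M\oplus \Omega M, M\oplus\Omega M)$ together with the identity $\sann_R(M) = \sann_R(M\oplus \Omega M)$ from Lemma \ref{first-lemma-for-stable-annihilator}(5). The corollary is simply the translation of that statement into the language of the subcategory $\smd(\EE)$, so the only thing to be careful about is that "direct summand" in the definition of $\smd$ is taken up to isomorphism, which is the standard convention.
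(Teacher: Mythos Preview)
Your argument is correct and matches the paper's approach exactly: the corollary is stated immediately after Proposition \ref{direct-sum-of-a-module-and-its-first-syzygy} as its direct consequence, using that every $M$ is a summand of $M \oplus \Omega M \in \EE$.
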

    We can also strengthen Proposition \ref{direct-sum-of-a-module-and-its-first-syzygy} for modules which are high enough syzygies.
    \begin{proposition}
        Let $R$ ba a commutative Noetherian local ring and $M$ be a finitely generated $R$-module. Assume that for some $r \gg 0$ and an $R$-module $N$, we have $M = \Omega^r N$. Then, for any $n > 0$, we have $M \oplus \Omega^n M \in \EE$.
    \end{proposition}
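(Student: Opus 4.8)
The plan is to establish the inclusion $\arann_R(M \oplus \Omega^n M) \subseteq \sann_R(M \oplus \Omega^n M)$, since the reverse one is automatic by Proposition \ref{first-proposition-for-auslander-reiten-annihilators}(1). The first step is to make precise what the hypothesis $r \gg 0$ provides. By Lemma \ref{first-lemma-for-stable-annihilator}(2) the chain
\begin{align*}
\sann_R(N) \subseteq \sann_R(\Omega N) \subseteq \sann_R(\Omega^2 N) \subseteq \cdots
\end{align*}
is ascending, hence stationary because $R$ is Noetherian; I would take $r$ large enough that it is stationary from index $r$ onward, which is exactly the meaning I attach to $r \gg 0$ here. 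Writing $M = \Omega^r N$, this says $\sann_R(M) = \sann_R(\Omega^j M)$ for every $j \geq 0$, and in particular, together with Lemma \ref{first-lemma-for-stable-annihilator}(5), $\sann_R(M \oplus \Omega^n M) = \sann_R(M) \cap \sann_R(\Omega^n M) = \sann_R(M)$.

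Next I would bound $\arann_R(M \oplus \Omega^n M)$ above by a single $\Ext$ module, imitating the proof of Proposition \ref{direct-sum-of-a-module-and-its-first-syzygy}. From the definition one has $\arann_R(X) \subseteq \ann_R \Ext_R^n(X,X)$, and for $X = M \oplus \Omega^n M$ the additivity of $\Ext$ realizes $\Ext_R^n(M, \Omega^n M)$ as an $R$-module direct summand of $\Ext_R^n(X,X)$, so that
\begin{align*}
\arann_R(M \oplus \Omega^n M) \subseteq \ann_R \Ext_R^n(M, \Omega^n M).
\end{align*}
A dimension shift then identifies this last annihilator with a stable annihilator: applying $\Hom_R(-, \Omega^n M)$ to the short exact sequences $0 \to \Omega^{k+1} M \to F_k \to \Omega^k M \to 0$ coming from the minimal free resolution of $M$, and using $\Ext_R^i(F_k, -) = 0$ for $i \geq 1$, one obtains isomorphisms $\Ext_R^n(M, \Omega^n M) \cong \Ext_R^{n-1}(\Omega M, \Omega^n M) \cong \cdots \cong \Ext_R^1(\Omega^{n-1} M, \Omega^n M)$. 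Since $\Omega^n M = \Omega(\Omega^{n-1} M)$, Lemma \ref{first-lemma-for-stable-annihilator}(1) gives $\ann_R \Ext_R^1(\Omega^{n-1} M, \Omega^n M) = \sann_R(\Omega^{n-1} M)$, which equals $\sann_R(M)$ by the stabilization from the first step.

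Combining the two displays, $\arann_R(M \oplus \Omega^n M) \subseteq \sann_R(M) = \sann_R(M \oplus \Omega^n M)$, so the inclusion is an equality and $M \oplus \Omega^n M \in \EE$. I do not expect a genuine obstacle here: the one conceptual point is the first step — reading $r \gg 0$ as ``past the point where the stable-annihilator chain of $N$ stabilizes'' — and this single fact is precisely what collapses $\sann_R(\Omega^{n-1}M)$ back down to $\sann_R(M)$, uniformly over all $n > 0$. The remaining work is the summand and dimension-shift bookkeeping already present in the case $n = 1$; the only routine care needed is to apply Lemma \ref{first-lemma-for-stable-annihilator}(1) to the minimal syzygy so that the identity $\Omega^n M = \Omega(\Omega^{n-1} M)$ holds on the nose.
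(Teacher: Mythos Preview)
Your proof is correct and follows essentially the same approach as the paper: stabilize the ascending chain of stable annihilators of the syzygies of $N$, bound $\arann_R(M\oplus\Omega^n M)$ by $\ann_R\Ext_R^n(M,\Omega^n M)$ via the direct-summand trick, then dimension-shift to $\sann_R(\Omega^{n-1}M)=\sann_R(M)$. The only differences are cosmetic (you cite the lemmas more explicitly and remark on minimal syzygies, which is harmless since stable annihilators are unaffected by projective summands).
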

    \begin{proof}
        We have an ascending chain $\sann_R(N) \subseteq \sann_R(\Omega N) \subseteq \sann_R (\Omega^2 N) \subseteq \cdots$ which stabilizes as $R$ is Noetherian. Therefore, if we choose $r$ large enough, we see that $\sann_R(\Omega^r N) = \sann_R(\Omega^{r+s}N)$ for any $s >0$. This gives us $\sann_R(M) = \sann_R(\Omega^{n-1}M)$. As earlier, we have 
        \begin{align*}
        \arann_R(M \oplus \Omega^n M) \subseteq \ann_R \Ext_R^n(M \oplus \Omega^n M, M \oplus \Omega^n M) \subseteq \ann_R\Ext_R^n(M, \Omega^n M)
        \end{align*}
        and we have 
        \begin{align*}
            \ann_R\Ext_R^n(M, \Omega^n M) = \ann_R\Ext_R^1(\Omega^{n-1}M, \Omega^n M) = \sann_R(\Omega^{n-1}M)
        \end{align*}
        but also we know $\sann_R(\Omega^{n-1}M) = \sann_R(M) = \sann_R(M \oplus \Omega^n M)$ which finishes the proof.
    \end{proof}
    Here is an immediate consequence.
    \begin{corollary}\label{periodic-corollary}
        Let $R$ be a commutative Noetherian local ring and $M$ be a finitely generated $R$-module with a periodic projective resolution. Then, we have $M \in \EE$.
    \end{corollary}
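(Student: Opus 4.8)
The plan is to bootstrap from the proposition immediately preceding this corollary, which shows that $M\oplus\Omega_R^nM\in\EE$ whenever $M$ is a sufficiently high syzygy and $n>0$; the key observation is that a module with a periodic projective resolution is a syzygy of itself.

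Concretely, if $p$ is the period of a periodic projective resolution of $M$, then we obtain a short exact sequence
\begin{align*}
0 \to M \to F_{p-1} \to \cdots \to F_0 \to M \to 0
\end{align*}
with the $F_i$ free, whence $\Omega_R^p M \cong M$ and therefore $\Omega_R^{kp} M \cong M$ for every $k \ge 1$. In particular $M = \Omega_R^{r}(M)$ with $r = kp$, which may be taken arbitrarily large, so the hypothesis of the preceding proposition is satisfied with $N = M$. Applying that proposition with $n = p$ gives $M \oplus \Omega_R^{p} M \in \EE$; since $\Omega_R^{p} M \cong M$, this reads $M^{\oplus 2} \in \EE$. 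Finally, Lemma~\ref{main-direct-sum-lemma}(2), applied with the single module $M$ and multiplicity $a_1 = 2$, yields $M^{\oplus 2} \in \EE \iff M \in \EE$, which completes the proof.

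There is no serious obstacle; the only point deserving a word of care is the first one, namely that a periodic projective resolution genuinely delivers the isomorphism $\Omega_R^p M \cong M$. If one only knows this up to free direct summands (or that periodicity sets in only from some syzygy onwards), it is harmless: adding or deleting free summands changes neither $\sann_R(-)$ (by Lemma~\ref{first-lemma-for-stable-annihilator}(5)) nor $\arann_R(-)$ (by \ref{auslander-reiten-annihilator-direct-sum} together with the vanishing of positive-degree Ext out of a free module), so membership in $\EE$ is unaffected and the argument above runs verbatim modulo free summands.
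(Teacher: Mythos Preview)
Your proof is correct and follows precisely the route the paper intends: the corollary is flagged as an immediate consequence of the preceding proposition, and you have spelled out the details exactly—realize $M$ as an arbitrarily high syzygy of itself via periodicity, apply the proposition with $n=p$ to get $M^{\oplus 2}\in\EE$, then descend via Lemma~\ref{main-direct-sum-lemma}(2). Your closing remark about free summands is a sensible robustness check, though not strictly needed under the paper's conventions.
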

    \begin{remark}\label{remark-about-periodic-modules}
        In fact, we can do better than this in Corollary \ref{periodic-corollary}. Assume that $R$ satisfies the Krull-Remak-Schmidt theorem (\textit{e.g.} $R$ is a Henselian local ring). For a finitely generated $R$-module $M$ with a direct sum decomposition $M = M_1^{\oplus a_1} \oplus \cdots \oplus M_n^{\oplus a_n}$ with $M_1, \ldots M_n$ non-isomorphic indecomposable modules, put $M_{\mathrm{basic}} = M_1 \oplus \cdots \oplus M_n$. By Lemma \ref{main-direct-sum-lemma}(2), we have $M \in \EE$ if and only if $M_{\mathrm{basic}}  \in \EE$. Therefore, instead of assuming that $\Omega^n M \cong M$ for some $n \geq 1$ to conclude $M \in \EE$, we can assume that $(\Omega^n M)_{\mathrm{basic}} \cong M_{\mathrm{basic}}$ up to projective summands.    
    \end{remark}
    \begin{corollary}\label{finite-syzygy-type-corollary}
        Let $R$ be a commutative Noetherian local ring in which Krull-Remak-Schmidt theorem holds and assume that for some $n \geq 0$, we have that $\Omega^n(\module R)$ has only finitely many indecomposable modules up to isomorphism. Then, for some $m \geq n$, we have 
        \begin{align*}
            \Omega^m(\module R) \subseteq \EE.
        \end{align*}
    \end{corollary}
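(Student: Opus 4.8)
The plan is to follow the \emph{support} of a module---its set of indecomposable non‑free direct summands---under iterated syzygies, exploiting that the hypothesis (together with the assumed Krull--Remak--Schmidt property) makes this set range over a finite universe. Fix a complete list $X_1,\dots,X_t$, up to isomorphism, of the indecomposable non‑free modules occurring as direct summands of $n$-th syzygy modules (a finite list by the hypothesis and Krull--Remak--Schmidt), and for $T\subseteq\{1,\dots,t\}$ set $W_T=\bigoplus_{i\in T}X_i$. Every $m$-th syzygy with $m\ge n$ is, up to free summands, of the form $\bigoplus_i X_i^{\oplus a_i}$; call $\mathrm{supp}(M)=\{i:a_i>0\}$ its support. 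Two observations drive everything: by Lemma~\ref{first-lemma-for-stable-annihilator}(5) (and $\sann_R(R)=R$) one has $\sann_R(M)=\bigcap_{i\in\mathrm{supp}(M)}\sann_R(X_i)$, so $\sann_R(M)$ depends only on $\mathrm{supp}(M)$; and by Lemma~\ref{main-direct-sum-lemma}(2), $M\in\EE$ if and only if $W_{\mathrm{supp}(M)}\in\EE$, so $\EE$-membership likewise depends only on $\mathrm{supp}(M)$ (free summands being harmless throughout).

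First I would record the dynamics. Since $X_i$ is a summand of some $n$-th syzygy, $\Omega X_i$ is a summand of an $(n+1)$-th syzygy, hence $\Omega X_i\cong\bigoplus_j X_j^{\oplus c_{ij}}\oplus(\text{free})$ with $c_{ij}\in\N$. Define $\phi\colon 2^{\{1,\dots,t\}}\to 2^{\{1,\dots,t\}}$ by $\phi(T)=\{\,j:c_{ij}>0\text{ for some }i\in T\,\}$. Using that $\Omega$ commutes with finite direct sums up to free summands, a direct computation gives $\mathrm{supp}(\Omega^s M)=\phi^s(\mathrm{supp}(M))$ for every $m$-th syzygy $M$ ($m\ge n$) and all $s\ge0$. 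As $2^{\{1,\dots,t\}}$ is finite, the $\phi$-orbit of any subset becomes periodic after at most $2^t$ steps; hence if $m\ge n+2^t$ then $\mathrm{supp}(\Omega^m N)$ lies in the periodic part of its $\phi$-orbit, so $\phi^p(\mathrm{supp}(\Omega^m N))=\mathrm{supp}(\Omega^m N)$ for some $p\ge1$. The same finiteness makes the ascending chain $k\mapsto\sann_R(\Omega^k N)$ (ascending by Lemma~\ref{first-lemma-for-stable-annihilator}(2)) \emph{constant} for $k\ge n+2^t$: for such $k$ it is simultaneously ascending and $p$-periodic, so it cannot move. In particular $\Omega^{n+2^t}N$ satisfies $\sann_R(\Omega^j(\Omega^{n+2^t}N))=\sann_R(\Omega^{n+2^t}N)$ for all $j\ge0$, which is exactly the hypothesis ``$r\gg0$'' of the proposition stated just before Corollary~\ref{periodic-corollary}.

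Now set $m=n+2^t$ and let $M\in\Omega^m(\module R)$, say $M\cong\Omega^m N$ up to free summands. Pick $p\ge1$ with $\mathrm{supp}(\Omega^p M)=\phi^p(\mathrm{supp}(M))=\mathrm{supp}(M)=:T$ as above. Since $M$ is deep enough, the proposition before Corollary~\ref{periodic-corollary}, applied with exponent $p$, yields $M\oplus\Omega^p M\in\EE$. But $\mathrm{supp}(M\oplus\Omega^p M)=T\cup T=T=\mathrm{supp}(M)$, so $M\oplus\Omega^p M$ is, up to free summands, $\bigoplus_{i\in T}X_i^{\oplus b_i}$ with every $b_i\ge1$; by Lemma~\ref{main-direct-sum-lemma}(2) we get $W_T\in\EE$, and applying that lemma once more (to $M$ itself) we get $M\in\EE$. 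Hence $\Omega^m(\module R)\subseteq\EE$.

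The main obstacle is uniformity: for a single module one only knows the support‑orbit is \emph{eventually} periodic and the chain $\sann_R(\Omega^k N)$ \emph{eventually} constant, with a priori module‑dependent thresholds, and $\EE$ is not closed under direct summands, so one cannot simply reduce to indecomposables. The finiteness hypothesis is precisely what converts ``eventually'' into a uniform bound: finitely many possible supports force every deep enough support to be genuinely $\phi$-periodic, and finitely many possible stable annihilators force a uniform stabilisation depth for the chains $\sann_R(\Omega^k N)$. The second delicate point is the passage from $M\oplus\Omega^p M\in\EE$ to $M\in\EE$: this uses no closure property of $\EE$, only that $M$ and $M\oplus\Omega^p M$ have the same basic module, so Lemma~\ref{main-direct-sum-lemma}(2) transports membership both ways --- the same mechanism underlying Remark~\ref{remark-about-periodic-modules}. (If one reads $\Omega^m(\module R)$ as closed under direct summands, nothing extra is needed, since a direct summand of an $m$-th syzygy is again an $m$-th syzygy up to free summands.)
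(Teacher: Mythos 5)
Your proof is correct and follows essentially the same strategy as the paper's: track the supports (equivalently, the basic modules in the sense of Remark~\ref{remark-about-periodic-modules}) of iterated syzygies, use the finiteness hypothesis to force eventual periodicity of these supports, deduce that the ascending chain of stable annihilators stabilises, and conclude via the Proposition preceding Corollary~\ref{periodic-corollary} together with Lemma~\ref{main-direct-sum-lemma}(2). The only difference is presentational: you make the uniform bound $m=n+2^t$ explicit through the dynamics $\phi$ on the finite power set, whereas the paper passes through $m=\sup_M m(M)$ and observes that this supremum is finite for the same finiteness reason.
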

    \begin{proof}
        Assume that there are only finitely many indecomposable modules up to isomorphism in $\Omega^n(\module R)$. Then, in particular, we have finitely many basic modules in $\Omega^n(\module R)$. Hence, for a finitely generated $R$-module $M$, some basic module appears in the sequence $[(\Omega^tM)_{\mathrm{basic}}]_{t \geq 0}$ infinitely often. Then, by Remark \ref{remark-about-periodic-modules} and Corollary \ref{periodic-corollary}, we conclude that some high syzygy, say $\Omega^{m(M)} M$, of $M$ belongs to $\EE$. Then, letting $m = \sup_{M \in \module R} m(M)$, we finish the proof (note that since $\Omega^n{\module R}$ contains finitely many indecomposable modules up to isomorphism, we know that $m$ is finite).  
    \end{proof}
    \begin{example}
        Let us give some examples where the conditions of Corollary \ref{finite-syzygy-type-corollary} are satisfied.
        \begin{enumerate}
            \item In dimension $1$, we have Arf rings. Let $R$ be an Arf ring which has reduced completion. Then, by \cite[Corollary 4.5]{Dao}, we have $\Omega \MCM(R)$ has finite type. Consequently, $\Omega^2 \module R$ has finitely many indecomposable modules up to isomorphism.
            \item In dimension 2, we have rational singularities. Let $R$ be a local normal domain of dimension two. Assume that $R$ is excellent and Henselian and it has an infinite residue field. Then by \cite[Corollary 3.3]{Dao-Iyama-Takahashi-Vial}, $\Omega \MCM(R)$ has finite type if and only if $\spec(R)$ has rational singularities. In this case, we have finitely many indecomposable modules up to isomorphism in $\Omega^3 \module R$.
            \item Let $k$ be an infinite field, $S = k[[x,y,z,u,v]] $ and $I$ be the ideal generated by the $2 \times 2$ minors of the matrix
            \begin{align*}
            A = \begin{bmatrix}
                x & y & u \\ 
                y & z & v
            \end{bmatrix}.
            \end{align*}
            Then, by \cite[16.12]{Yoshino}, the ring $R = S/I$ is a 3-dimensional normal domain of finite Cohen-Macaulay type. Therefore, once again, we have that $\Omega^3 \module R$ contains finitely many indecomposable modules up to isomorphism.
            
        \end{enumerate}
    \end{example}

    We finish this discussion with more remarks on the module $M \oplus \Omega M$. For the rest of this section, assume that $x \in R$ is a nonzerodivisor on $M$. Consider a short exact sequence
    \begin{align*}
        \alpha \colon 0 \to \Omega M \to P \xrightarrow{\epsilon} M \to 0
    \end{align*}
    with $P$ projective and the pullback diagram
    \begin{align*}
        \xymatrix{
                 &    & 0 \ar[d]   & 0 \ar[d] \\
    0 \ar[r] & \Omega M \ar[r] \ar@{=}[d]  & Q \ar[r] \ar[d] & M \ar[r]
    \ar[d]^{x} & 0 \\
    0 \ar[r] & \Omega M \ar[r]  & P \ar[r]^{\varepsilon} \ar[d] & M \ar[r] \ar[d] &
          0\\
                  &    & M/xM \ar@{=}[r] \ar[d]  & M/xM \ar[d] \\
                  &    & 0          & 0
        }
    \end{align*}
    whose middle column tells us that we have a short exact sequence
    \begin{align}\label{definition-of-beta}
        \beta \colon 0 \to \Omega M \to \Omega_R(M/xM) \xrightarrow{\eta} M \to 0
    \end{align}
    in the first row. By construction, as elements of $\Ext_R^1(M,\Omega M)$ we have $\beta = x \alpha$. This implies that if $x$ annihilates $\Ext_R^1(M,\Omega M)$, that is, if $x$ stably annihilates $M$, then the short exact sequence $\beta$ splits. The converse is also true. Therefore, we have $\Omega_R(M/xM) \cong M \oplus \Omega M$ if and only if $x$ stably annihilates $M$. We refer to \cite[Section 2]{Dugas-Leuschke} for more details. Now, the next corollary is Proposition \ref{direct-sum-of-a-module-and-its-first-syzygy} rephrased with this discussion.
    \begin{corollary}
        Let $R$ be a commutative Noetherian local ring and $M$ be a finitely generated $R$-module. If $x \in \sann_R(M)$ is a nonzerodivisor on $M$, then we have  $\Omega_R(M/xM) \in \EE$. In particular, the $R$-module $M/xM$ satisfies the Auslander-Reiten Conjecture.
    \end{corollary}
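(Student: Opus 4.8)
The plan is to deduce this directly from the pullback construction in the paragraph preceding the statement together with Proposition~\ref{direct-sum-of-a-module-and-its-first-syzygy}. Since $x$ is a nonzerodivisor on $M$, the sequence $0 \to M \xrightarrow{x} M \to M/xM \to 0$ is exact, and the pullback diagram above produces the short exact sequence $\beta \colon 0 \to \Omega M \to \Omega_R(M/xM) \to M \to 0$ with $\beta = x\alpha$ as elements of $\Ext_R^1(M,\Omega M)$. By Lemma~\ref{first-lemma-for-stable-annihilator}(1) the hypothesis $x \in \sann_R(M)$ says precisely that $x$ annihilates $\Ext_R^1(M,\Omega M)$; hence $\beta = x\alpha = 0$, so $\beta$ splits and $\Omega_R(M/xM) \cong M \oplus \Omega M$ --- this is exactly the equivalence already recorded in that paragraph. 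Then Proposition~\ref{direct-sum-of-a-module-and-its-first-syzygy} gives $M \oplus \Omega M \in \EE$, and since neither $\sann_R$ nor $\arann_R$ is affected by free direct summands, it follows that $\Omega_R(M/xM) \in \EE$.

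For the second assertion I would invoke the syzygy reduction for the Auslander-Reiten Conjecture recalled in \ref{introduction-the-category-e}. Suppose $\Ext_R^i(M/xM, (M/xM) \oplus R) = 0$ for all $i > 0$. Applying $\Hom_R(-, \Omega(M/xM) \oplus R)$ and $\Hom_R(-,R)$ to a short exact sequence $0 \to \Omega(M/xM) \to F \to M/xM \to 0$, and using $\Ext_R^i(M/xM, R) = 0$ for $i > 0$, one obtains $\Ext_R^i(\Omega(M/xM), \Omega(M/xM) \oplus R) = 0$ for all $i > 0$, i.e. $\arann_R(\Omega(M/xM)) = R$. Since $\Omega_R(M/xM) \in \EE$ by the first part, this forces $\sann_R(\Omega(M/xM)) = R$, so $\Omega(M/xM)$ is free by \ref{stable-annihilator-of-a-projective}. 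Therefore $M/xM$ has projective dimension at most $1$; as $\Ext_R^1(M/xM, R) = 0$ rules out projective dimension exactly $1$, the module $M/xM$ is free, which is the assertion that $M/xM$ satisfies the Auslander-Reiten Conjecture.

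I do not expect a genuine obstacle here: the substantive content --- the identification of $\Omega_R(M/xM)$ with $M \oplus \Omega M$ when $x$ stably annihilates $M$, and the fact that $M \oplus \Omega M \in \EE$ --- has already been established in the excerpt. The only points that need a line of justification are that membership in $\EE$ is insensitive to free summands (so that the isomorphism $\Omega_R(M/xM) \cong M \oplus \Omega M$, which is in any case only canonical up to free summands, is enough) and the standard fact that a module of finite projective dimension $m \ge 1$ over a local ring has $\Ext_R^m(-,R) \neq 0$, which is already quoted in \ref{introduction-the-category-e}.
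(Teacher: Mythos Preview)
Your proposal is correct and matches the paper's approach exactly: the paper states that this corollary is ``Proposition~\ref{direct-sum-of-a-module-and-its-first-syzygy} rephrased with this discussion,'' and you have spelled out precisely that rephrasing, including the syzygy-reduction step from \ref{introduction-the-category-e} for the ``in particular'' clause. Your side remarks about insensitivity of $\EE$ to free summands and the nonvanishing of $\Ext_R^m(-,R)$ in finite projective dimension are the right small justifications to make the argument airtight.
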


    \begin{example}
        Assume that $R$ is a Gorenstein local ring and $x$ is a nonzerodivor on $R$ that lies in the \textit{cohomology annihilator} of $R$. This means that $x \in \sann_R(M)$ for any maximal Cohen-Macaulay $R$-module $M$ (this is equivalent to the original definition of the cohomology annihilator ideal \cite[Definition 2.1]{Iyengar-Takahashi} for Gorenstein rings \cite[Lemma 2.3]{Esentepe}). Since $x$ is a nonzerodivisor, it is a nonzerodivisor on any maximal Cohen-Macaulay module $M$. Hence, for any maximal Cohen-Macaulay $R$-module $M$, the $R$-module $M/xM$ satisfies the Auslander-Reiten Conjecture.
    \end{example}

    \begin{remark}
        Similar arguments also hold for regular sequences of longer length by using \cite[Proposition 2.2]{takahashi-thick}.
    \end{remark}
\end{chunk}

\section{Cohen-Macaulay Rings}

In this section, we focus our attention on Cohen-Macaulay rings. 

\begin{chunk}
    \textbf{Dimension 1.} In Example \ref{nearly-gorenstein-example}, we showed that the canonical module $\omega$ belongs to $\EE$ when $R$ is a nearly Gorenstein ring. We did this by arguing that we have 
    \begin{align*}
        \m \subseteq \tr(\omega) = \sann_R(\omega) \subseteq \arann_R(\omega) \neq R.
    \end{align*}
    By the same argument, if $R$ is any commutative Noetherian local ring, $M$ is a finitely generated $R$-module and if we know both $\m \subseteq \sann_R(M)$ and $\arann_R(M) \neq R$, then we can conclude $\sann_R(M) = \arann_R(M)$. Now, we are going to see a class of rings where we can do this for any maximal Cohen-Macaulay module.
    
    \begin{definition}
        Let $R$ be a commutative Noetherian local ring and $Q(R)$ be its total quotient ring. Denote by $\overline{R}$ the integral closure of $R$ inside $Q(R)$. Then, the set 
        \begin{align*}
            \co(R) = \{ r \in \overline{R} \colon r\overline{R} \subseteq R \}
        \end{align*}
    is called the \textit{conductor} of $R$.
    \end{definition}
    It was proved in \cite[Corollary 3.1]{Wang} that if $R$ is a one-dimensional commutative Noetherian complete reduced local ring (in particular, $R$ is Cohen-Macaulay), then for any two maximal Cohen-Macaulay modules $M,N$, we have $\co(R) \subseteq \ann_R\Ext_R^1(M,N)$. 
    \begin{proposition}\label{far-flung-Gorenstein-nearly}
        Assume that $(R, \m)$ is a one-dimensional Noetherian complete reduced local ring such that $\m \subseteq \co(R)$. Then, any maximal Cohen-Macaulay $R$-module belongs to $\EE$.
    \end{proposition}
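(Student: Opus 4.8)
The plan is to trap $\arann_R(M)$ between $\mathfrak m$ and $R$ and then kill the top end by establishing the Auslander--Reiten conjecture for $R$ itself.

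First I would set up the trap. Over a one-dimensional Cohen--Macaulay ring every first syzygy is maximal Cohen--Macaulay, so for an MCM module $M$ each $\Omega^{i-1}M$ is MCM and $\Ext_R^i(M,N)\cong\Ext_R^1(\Omega^{i-1}M,N)$ for $i>0$. Feeding this into \cite[Corollary 3.1]{Wang} (which applies since $R$ is complete, reduced and one-dimensional) gives $\co(R)\subseteq\ann_R\Ext_R^i(M,N)$ for all $i>0$ and all MCM $M,N$, hence $\co(R)\subseteq\e(M,N)$. Taking $N=\Omega M$ together with Lemma \ref{first-lemma-for-stable-annihilator}(1) yields $\co(R)\subseteq\sann_R(M)$, and taking $N=M$ and $N=R$ yields $\co(R)\subseteq\arann_R(M)$. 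Since $\mathfrak m\subseteq\co(R)$ by hypothesis, $\mathfrak m\subseteq\sann_R(M)\subseteq\arann_R(M)$. If $M$ is free, both sides equal $R$ and $M\in\EE(R)$. If $M$ is not free, $\sann_R(M)$ is a proper ideal containing the maximal ideal, hence $\sann_R(M)=\mathfrak m$, and by the remark recorded just before the statement it remains only to show $\arann_R(M)\neq R$ --- equivalently, that $R$ satisfies the Auslander--Reiten conjecture.

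So the real work is to prove the Auslander--Reiten conjecture for $R$. If $R$ is regular there is nothing to do, so assume otherwise; then $\co(R)\subsetneq R$, and $\mathfrak m\subseteq\co(R)$ forces $\co(R)=\mathfrak m$, i.e. $\mathfrak m\overline{R}=\mathfrak m$. I claim this makes $R$ an Arf ring: given a nonzerodivisor $x\in\mathfrak m$ and $y,z\in\mathfrak m$ with $y/x,z/x\in\overline{R}$, write $z=x\beta$ with $\beta\in\overline{R}$; then $yz/x=\beta y\in\mathfrak m\overline{R}=\mathfrak m\subseteq R$, which is one of the standard characterizations of the Arf condition. Since $R$ is moreover complete and reduced, $\Omega\MCM(R)$ has finite type by \cite[Corollary 4.5]{Dao}, so $\Omega^2(\module R)\subseteq\Omega(\MCM(R))$ has only finitely many indecomposables (each second syzygy being a first syzygy of an MCM module); as $R$ is complete, hence Henselian, Corollary \ref{finite-syzygy-type-corollary} then supplies an $m$ with $\Omega^m(\module R)\subseteq\EE(R)$. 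Now suppose $M$ is not free but $\Ext_R^i(M,M\oplus R)=0$ for all $i>0$. The reduction to syzygies in \ref{introduction-the-category-e} propagates this vanishing to every $\Omega^j M$; choosing $j\geq m$ gives $\Omega^j M\in\Omega^m(\module R)\subseteq\EE(R)$, so $\sann_R(\Omega^j M)=\arann_R(\Omega^j M)=R$ and $\Omega^j M$ is free. Then $M$ has finite projective dimension, and since $\Ext_R^i(M,R)=0$ for all $i>0$ it must be free, a contradiction. Hence $\arann_R(M)=\mathfrak m=\sann_R(M)$ and $M\in\EE(R)$.

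The main obstacle is the third paragraph. The Wang-based trapping is routine once one observes that syzygies are maximal Cohen--Macaulay in dimension one; what takes some thought is recognizing that the statement genuinely needs the full Auslander--Reiten conjecture for $R$ --- an arbitrary MCM module need not lie in the $\Omega^m(\module R)$ produced by Corollary \ref{finite-syzygy-type-corollary}, so that corollary by itself does not suffice --- together with the short observation that $\mathfrak m\overline{R}=\mathfrak m$ forces the Arf property and hence brings the paper's finite-type input into play. I would double-check the precise form of the Arf characterization invoked and the reduction-to-syzygies step (that a free high syzygy together with $\Ext_R^{>0}(M,R)=0$ forces $M$ free), but I expect no surprises there.
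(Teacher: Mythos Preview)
Your argument is correct. The trapping step via Wang's conductor result is exactly what the paper does, and your reduction to verifying the Auslander--Reiten conjecture for $R$ is the same. Where you diverge is in how you establish that conjecture.

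The paper invokes two outside results: $\m\subseteq\co(R)$ forces minimal multiplicity \cite[Theorem 5.1]{goto-matsuoka-phuong}, and rings of minimal multiplicity satisfy the Auslander--Reiten conjecture \cite{Dey-Ghosh}. You instead observe directly that $\m\overline{R}=\m$ implies the Arf condition via Lipman's characterization, then feed this into \cite[Corollary 4.5]{Dao} and the paper's own Corollary~\ref{finite-syzygy-type-corollary} to produce an $m$ with $\Omega^m(\module R)\subseteq\EE(R)$, and finally run the syzygy reduction from~\ref{introduction-the-category-e} to deduce the conjecture. Your route is longer but has the virtue of being largely internal to the paper: it shows that the finite-syzygy-type machinery already developed here suffices, with Dao's Arf result as the only extra input, whereas the paper's route trades that for two short external citations. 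Your Arf verification is fine; the restriction to $y,z\in\m$ is harmless since $y=x(y/x)\in\m\overline{R}=\m$ automatically (and the unit cases are trivial), so the Lipman characterization is satisfied in full.
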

    \begin{proof}
        Let $M$ be a maximal Cohen-Macaulay $R$-module. By Wang's result mentioned before the statement of the proposition, we have
        \begin{align*}
            \m \subseteq \co(R) \subseteq \ann_R \Ext_R^1(M, \Omega M) = \sann_R(M).
        \end{align*}
        On the other hand, any ring with $\m \subseteq \co(R)$ has minimal multiplicity \cite[Theorem 5.1]{goto-matsuoka-phuong} and such rings satisfy the Auslander-Reiten Conjecture \cite{Dey-Ghosh} and therefore for any non-free $M$, we have $\arann_R(M) \neq R$. This finishes the proof. 
    \end{proof}
    \begin{remark}
        Even without the assumption that $R$ is complete and reduced, we have an inclusion $\co(R) \subseteq \tr(\omega) = \sann_R(\omega)$ by \cite[Proposition A.1]{Herzog-Takayuki-Dumitru}. When $\co(R) = \tr(\omega)$, that is when the canonical trace is as small as it can possibly be, the ring $R$ is called \textit{far-flung-Gorenstein} \cite{herzog-kumashiro-stamate}. Hence, the hypothesis in Proposition \ref{far-flung-Gorenstein-nearly} is saying that $R$ is far-flung Gorenstein and nearly Gorenstein. This implies that $R$ is \textit{almost Gorenstein} \cite[Proposition 1.1]{Huneke-Vraciu} and this was the reason why we were able to use \cite[Theorem 5.1]{goto-matsuoka-phuong} in the proof of Proposition \ref{far-flung-Gorenstein-nearly}.
    \end{remark}
    \begin{example}
        Let $n$ be a positive integer. Consider the numerical semigroup 
        \begin{align*}
            S = \langle n, n+1, \ldots, 2n-1\rangle
        \end{align*}
        and the consider semigroup algebra $R = k[[S]]$ where $k$ is a field. Then, it is easy to see that the conductor of $R$ equals the maximal ideal and therefore we have $\MCM(R) \subseteq \EE(R)$ for such a ring.
    \end{example}
    We are now going to give an application of Remark \ref{minimal-stable-annihilator}.
    \begin{proposition}
        Let $R$ be a Gorenstein local ring of Krull dimension one with reduced completion. Then, we have $\overline{R} \in \EE$ if and only if we have $M \oplus \overline{R} \in \EE$ for any maximal Cohen-Macaulay $R$-module $M$.
    \end{proposition}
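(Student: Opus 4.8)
The plan is to show that $\overline R$ is a maximal Cohen--Macaulay $R$-module whose stable annihilator is the smallest one occurring among maximal Cohen--Macaulay modules (so that $\overline R$ plays the role of the module $M_0$ of Remark~\ref{minimal-stable-annihilator}), and then to read off the equivalence from the direct-sum lemmas of Section~2. As a preliminary, since $\widehat R$ is reduced the ring $R$ is reduced and analytically unramified, so $\overline R$ is a finite $R$-module; being normal of Krull dimension one it is locally regular, hence Cohen--Macaulay, and therefore a maximal Cohen--Macaulay $R$-module.

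With this in hand the ``only if'' direction is immediate: if $M\oplus\overline R\in\EE$ for every maximal Cohen--Macaulay $R$-module $M$, then applying the hypothesis to $M=\overline R$ gives $\overline R^{\oplus 2}\in\EE$, and Lemma~\ref{main-direct-sum-lemma}(2) gives $\overline R\in\EE$.

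For the ``if'' direction I would assume $\overline R\in\EE$ and, by Lemma~\ref{another-lemma-on-direct-sum}, reduce to proving $\sann_R(\overline R)\subseteq\sann_R(M)$ for every maximal Cohen--Macaulay $R$-module $M$; the idea is to express both ideals through the conductor $\co(R)$. On the one hand, $\overline R$ is a finite $R$-submodule of $Q(R)$ containing the nonzerodivisor $1$, so Dey's theorem \cite{Dey} gives $\sann_R(\overline R)=\tr(\overline R)$, and identifying $\Hom_R(\overline R,R)$ inside $Q(R)$ with the multiplications by the elements $q\in R$ satisfying $q\overline R\subseteq R$ shows $\tr(\overline R)=\co(R)\cdot\overline R=\co(R)$, since $\co(R)$ is an ideal of $\overline R$. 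On the other hand, $\widehat M$ is a maximal Cohen--Macaulay module over the one-dimensional complete reduced local ring $\widehat R$, so \cite[Corollary~3.1]{Wang} yields $\co(\widehat R)\subseteq\ann_{\widehat R}\Ext^1_{\widehat R}(\widehat M,\Omega_{\widehat R}\widehat M)=\sann_{\widehat R}(\widehat M)$; because $R$ is analytically unramified one has $\widehat{\overline R}=\overline{\widehat R}$, hence $\co(R)\widehat R=\co(\widehat R)$, and $\sann_R(M)\widehat R=\sann_{\widehat R}(\widehat M)$ since $R\to\widehat R$ is flat, so faithfully flat descent along $R\to\widehat R$ gives $\co(R)\subseteq\sann_R(M)$. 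Combining the two computations finishes the proof.

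The one step that needs care is this passage to the completion: one must be sure that stable annihilators, the conductor, and the integral closure are all compatible with the faithfully flat map $R\to\widehat R$, which is exactly where analytic unramifiedness (the reducedness of $\widehat R$) enters. Everything else is a direct application of results already established: Lemmas~\ref{main-direct-sum-lemma}(2) and~\ref{another-lemma-on-direct-sum}, Dey's theorem, and Wang's inequality.
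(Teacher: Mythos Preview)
Your argument is correct and follows the same strategy as the paper: identify $\sann_R(\overline R)$ with the conductor, show the conductor is contained in $\sann_R(M)$ for every maximal Cohen--Macaulay $M$, and then invoke Lemma~\ref{another-lemma-on-direct-sum} (this is exactly the content of Remark~\ref{minimal-stable-annihilator}). The paper obtains $\sann_R(\overline R)=\co(R)$ by citing \cite[Corollary~4.2]{Esentepe} and the inclusion $\co(R)\subseteq\sann_R(M)$ by citing \cite[Theorem~5.10]{Esentepe}, whereas you derive the first from Dey's theorem and the second by an explicit passage to the completion via Wang's result; these are really the same computations unpacked. Your treatment of the completion step (using $\widehat{\overline R}=\overline{\widehat R}$ for analytically unramified rings and faithful flatness of $R\to\widehat R$) is careful and correct, and in fact shows that the Gorenstein hypothesis is not used in your version of the argument.

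One small correction: you have the labels reversed. In the biconditional ``$\overline R\in\EE$ if and only if $M\oplus\overline R\in\EE$ for all $M$'', the \emph{only if} direction is $\overline R\in\EE\Rightarrow M\oplus\overline R\in\EE$ for all $M$ (the substantive direction), and the \emph{if} direction is the converse (the trivial one you dispatch with $M=\overline R$ and Lemma~\ref{main-direct-sum-lemma}(2)). The mathematics is unaffected, but the names should be swapped.
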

    \begin{proof}
        By \cite[Corollary 4.2]{Esentepe}, the conductor ideal coincides with the stable annihilator of the normalisation. Therefore, by Wang's result \cite[Corollary 3.1]{Wang} mentioned above (or more precisely by \cite[Theorem 5.10]{Esentepe} for the reduced completion assumption), the stable annihilator $\sann_R(\overline{R})$
        is contained in the stable annihilator of all maximal Cohen-Macaulay modules. By Remark \ref{minimal-stable-annihilator}, the result follows.
    \end{proof}

\end{chunk}

\begin{chunk}
    \textbf{More on Example \ref{nearly-gorenstein-example}.} Let $R$ be a Cohen-Macaulay local ring with canonical module $\omega$ of Krull dimension $d$. Assume that $R$ is generically Gorenstein (we only need this assumption to ensure that $R$ satisfies the Tachikawa Conjecture \cite{Avramov-Buchweitz-Sega}). As both the annihilators and Ext-modules works well with localisations, as mentioned in the introduction in this case $\arann_R(\omega)$ defines the non-free locus of $\omega$. Hence, we have an equality 
    \begin{align*}
    \sqrt{\arann_R(\omega)} = \sqrt{\sann_R(\omega)} 
    \end{align*}
    of radicals. This gives us the following generalisation of Example \ref{nearly-gorenstein-example}.
    \begin{proposition}\label{radical-canonical-trace-proposition}
    Assume that $R$ is generically Gorenstein and the trace ideal $\tr(\omega)$ is radical. Then, we have $\omega \in \EE$.    
    \end{proposition}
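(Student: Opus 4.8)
The plan is to lean on the discussion immediately preceding the statement, which has essentially already produced the equality $\sqrt{\arann_R(\omega)} = \sqrt{\sann_R(\omega)}$; what remains is to recognise that, under the hypothesis, $\sann_R(\omega)$ is its own radical. So first I would identify $\sann_R(\omega)$ with $\tr(\omega)$. Since $R$ is generically Gorenstein, $\omega$ (a maximal Cohen--Macaulay, hence torsion-free, module) is isomorphic to an ideal $I \subseteq R$ that is not contained in any minimal prime, so by prime avoidance $I$ contains a nonzerodivisor of $R$. Applying the main theorem of \cite{Dey} to the embedding $I \hookrightarrow Q(R)$ gives $\sann_R(\omega) = \sann_R(I) = \tr(I) = \tr(\omega)$, the last equality because the trace ideal is an isomorphism invariant. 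Hence the hypothesis that $\tr(\omega)$ is radical says precisely that $\sann_R(\omega)$ is radical.

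Next I would spell out the localisation argument behind $\sqrt{\arann_R(\omega)} = \sqrt{\sann_R(\omega)}$. For every prime $p$, the ring $R_p$ is Cohen--Macaulay with canonical module $\omega_p$, and it is still generically Gorenstein, since its minimal primes lie over minimal primes of $R$, at which $R$ is Gorenstein. Therefore the Tachikawa Conjecture holds for $R_p$ by \cite{Avramov-Buchweitz-Sega}, and, since annihilators of $\Ext$-modules of finitely generated modules localise, $\arann_R(\omega)_p = \arann_{R_p}(\omega_p)$ equals $R_p$ if and only if $\Ext_{R_p}^{>0}(\omega_p, R_p) = 0$ together with $\Ext_{R_p}^{>0}(\omega_p,\omega_p)=0$; by Tachikawa the first vanishing already forces $R_p$ Gorenstein, i.e. $\omega_p$ free (and conversely freeness of $\omega_p$ gives both vanishings). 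Thus $\arann_R(\omega)$ and $\sann_R(\omega)$ both cut out the non-free locus $\nf(\omega)$, and in particular have the same radical.

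Finally I would combine the two facts with the inclusion $\sann_R(\omega) \subseteq \arann_R(\omega)$ from Proposition \ref{first-proposition-for-auslander-reiten-annihilators}(1):
\[
\sann_R(\omega) \subseteq \arann_R(\omega) \subseteq \sqrt{\arann_R(\omega)} = \sqrt{\sann_R(\omega)} = \sann_R(\omega),
\]
so all inclusions are equalities and $\omega \in \EE$.

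I expect little genuine difficulty here: the substantive inputs (the Tachikawa Conjecture for generically Gorenstein Cohen--Macaulay rings, and the compatibility of these ideals with localisation) were set up before the statement. The one step that deserves an explicit line inside the proof is $\sann_R(\omega) = \tr(\omega)$; this is where generic Gorensteinness is used a second time, to present $\omega$ as an ideal containing a nonzerodivisor so that \cite{Dey} applies, and without it the stable annihilator need not agree with the canonical trace.
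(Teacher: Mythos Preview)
Your proposal is correct and follows essentially the same route as the paper: the chain
\[
\sann_R(\omega) \subseteq \arann_R(\omega) \subseteq \sqrt{\arann_R(\omega)} = \sqrt{\sann_R(\omega)}
\]
collapses once $\sann_R(\omega)$ is radical, and the equality of radicals comes from the Tachikawa Conjecture for generically Gorenstein rings exactly as set up in the paragraph preceding the proposition. The only difference is expository: the paper takes $\sann_R(\omega)=\tr(\omega)$ as a known fact (it holds for any Cohen--Macaulay local ring with canonical module, cf.\ \cite{Dao-Kobayashi-Takahashi}, and is used without comment already in Example~\ref{nearly-gorenstein-example}), whereas you rederive it by presenting $\omega$ as a regular ideal and invoking \cite{Dey}. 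Your route works fine under the standing generic-Gorenstein hypothesis, but note that this particular identification does not actually need that hypothesis, so you are using generic Gorensteinness once more than is strictly necessary.
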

    \begin{proof}
        Since $R$ is generically Gorenstein, we have
        \begin{align*}
            \sann_R(\omega) \subseteq \arann_R(\omega) \subseteq \sqrt{\arann_R(\omega)} = \sqrt{\sann_R(\omega)}. 
            \end{align*}
        When $\sann_R(\omega)$ is radical, all inclusions become equalities.
    \end{proof}
This motivates us to pose the following question.
\begin{question}
    Let $R$ be a Cohen-Macaulay local ring with canonical module $\omega$. When is the canonical trace ideal $\tr(\omega)$ radical? When is it prime?
\end{question}

    We shall now see that we can also replace the generically Gorenstein assumption (or the assumption that $R$ locally satisfies the Tachikawa Conjecture) with some other assumption. Namely, we can assume that $R$ is a Cohen-Macaulay local ring with canonical module $\omega$ and for some positive integer $j$, we have $D \Omega^j \omega$ appears as a syzygy of $\omega$. We start with proving some preliminary lemmas for annihilators of Ext-modules in general. 
    \begin{lemma}\label{general-lemma-for-annihilators}
        Let $0 \to A \to B \to C \to 0$ be a short exact sequence of $R$-modules and $X$ be an $R$-module. Then, we have
        \begin{align*}
            \ann_R\Ext_R^1(A,X) \cdot \ann_R\Ext_R^1(C,X) \subseteq \ann_R\Ext_R^1(B,X).
        \end{align*}
    \end{lemma}
    \begin{proof}
            Let $r\in \ann_R\Ext_R^1(A,X)$ and $s \in \ann_R\Ext_R^1(C,X)$. Applying $\Hom_R(-,X)$ to the short exact sequence gives an exact sequence
            \begin{align*}
                \Ext_R^1(C,X) \xrightarrow{\Phi} \Ext_R^1(B,X) \xrightarrow{\Psi} \Ext_R^1(A,X) 
            \end{align*}
            Assume $\sigma \in \Ext_R^1(B,X)$. Then $\Psi(r \sigma) = r\Psi(\sigma) =0$ and hence $r\sigma \in \ker \Psi $. Hence, There is $\eta \in \Ext_R^1(C,X)$ such that $\Phi(\eta) = r\sigma$. Then,
            \begin{align*}
                sr\sigma = sr\Phi(\eta) = r\Phi(s \eta) = r\Phi(0)=0.
            \end{align*}
    \end{proof}
    \begin{definition}\cite[Definition 5.1]{Dao-Takahashi}
        For two full subcategories $\XX_1, \XX_2$ of $\module R$, define $\XX_1 \ast \XX_2 $ with the following property: A module $M$ belongs to $\XX_1 \ast \XX_2$ if and only if there exists a short exact sequence $0 \to X_1 \to E \to X_2 \to 0$ such that $X_1 \in \add \XX_1, X_2 \in \add \X_2$ and $M$ is a direct summand of $E$. Inductively, put
        \begin{align*}
            |\XX |_r= \begin{cases}
                \add \XX & r = 1 \\
                |\XX|_{r-1} \ast \XX & r \geq 2. 
            \end{cases}
        \end{align*}
        When $\XX$ consists of a single object $X$, we write $|X|_r$ instead.
    \end{definition}
    \begin{lemma}\label{lemma-for-annihilators-from-balls}
        Let $R$ be a commutative Noetherian local ring and $\XX$ be a full subcategory. Assume that $M \in |\XX|_r$. Then, for any $R$-module $Y$, we have
        \begin{align*}
            (\ann_R\Ext_R^1(\XX,Y))^r \subseteq \ann_R\Ext_R^1(M,Y) 
        \end{align*}
        where
        \begin{align*}
            \ann_R\Ext_R^1(\XX,Y) = \bigcap_{X \in \XX} \ann_R\Ext_R^1(X,Y).
        \end{align*}
    \end{lemma}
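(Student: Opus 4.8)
The plan is to argue by induction on $r$, with the two engines being the direct-summand/additivity behaviour of $\Ext_R^1(-,Y)$ on the first variable and Lemma \ref{general-lemma-for-annihilators}.

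First I would treat the base case $r = 1$. Here $M \in \add\XX$, so $M$ is a direct summand of a finite direct sum $\bigoplus_j X_j$ with each $X_j \in \XX$. Since $\Ext_R^1(-,Y)$ carries finite direct sums to finite direct sums and direct summands to direct summands, $\Ext_R^1(M,Y)$ is a direct summand of $\bigoplus_j \Ext_R^1(X_j,Y)$; hence $\ann_R\Ext_R^1(M,Y) \supseteq \bigcap_j \ann_R\Ext_R^1(X_j,Y) \supseteq \ann_R\Ext_R^1(\XX,Y)$. The same observation shows, more generally, that for any $r$ one has $(\ann_R\Ext_R^1(\XX,Y))^{r} \subseteq \ann_R\Ext_R^1(Z,Y)$ for every $Z \in \add|\XX|_{r}$ as soon as it is known for every $Z \in |\XX|_r$; I would isolate this as a small remark so it can be reused freely in the inductive step.

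For the inductive step, suppose the statement holds for $r-1$ and let $M \in |\XX|_r = |\XX|_{r-1}\ast\XX$. By definition there is a short exact sequence $0 \to X_1 \to E \to X_2 \to 0$ with $X_1 \in \add|\XX|_{r-1}$, $X_2 \in \add\XX$, and $M$ a direct summand of $E$. Writing $\mathfrak{a} = \ann_R\Ext_R^1(\XX,Y)$, the base case (in its $\add$-form) gives $\mathfrak{a} \subseteq \ann_R\Ext_R^1(X_2,Y)$, and the inductive hypothesis (in its $\add$-form) gives $\mathfrak{a}^{r-1} \subseteq \ann_R\Ext_R^1(X_1,Y)$. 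Now Lemma \ref{general-lemma-for-annihilators} applied to $0 \to X_1 \to E \to X_2 \to 0$ yields
\[
\mathfrak{a}^{r} = \mathfrak{a}^{r-1}\cdot\mathfrak{a} \subseteq \ann_R\Ext_R^1(X_1,Y)\cdot\ann_R\Ext_R^1(X_2,Y) \subseteq \ann_R\Ext_R^1(E,Y),
\]
and since $M$ is a direct summand of $E$ we conclude $\mathfrak{a}^{r} \subseteq \ann_R\Ext_R^1(E,Y) \subseteq \ann_R\Ext_R^1(M,Y)$, completing the induction.

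There is no real obstacle here beyond careful bookkeeping: the only subtlety is to make sure the induction is set up so that the hypothesis is available not just for objects of $|\XX|_{r-1}$ but for objects of $\add|\XX|_{r-1}$, which is why I would prove the slightly stronger $\add$-version in one stroke. Everything else is a direct application of Lemma \ref{general-lemma-for-annihilators} and the exactness/additivity of $\Ext$.
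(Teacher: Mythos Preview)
Your proposal is correct and follows essentially the same route as the paper: induction on $r$, handling $r=1$ via additivity of $\Ext_R^1(-,Y)$ over direct sums and summands, and for $r\ge 2$ invoking Lemma~\ref{general-lemma-for-annihilators} on the defining short exact sequence. Your write-up is simply more explicit about the bookkeeping (the passage to $\add$ and the summand step for $E$), which the paper leaves implicit in the phrase ``we use Lemma~\ref{general-lemma-for-annihilators} inductively.''
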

    \begin{proof}
        For $r=1$, this follows from the fact that
        \begin{align*}
            \ann_R(L_1\oplus L_2) = \ann_R L_1 \cap \ann_R L_2
        \end{align*}
        for any two modules $L_1,L_2$ and the fact that $\Ext_R^1(-,Y)$ distributes over direct sums. For $r \geq 2$, we use Lemma \ref{general-lemma-for-annihilators} inductively.
    \end{proof}

        We know that $\sann_R(\omega) = \ann_R\Ext_R^1(\omega, \Omega \omega)$. By applying the canonical dual, we get the equality $\sann_R(\omega) = \ann_R\Ext_R^1(D\Omega \omega, R)$. This implies that if $D\Omega \omega \in [\XX]_r $, then by Lemma \ref{lemma-for-annihilators-from-balls} we get $\arann_R(\omega)^r \subseteq \sann_R(\omega)$ where $\XX= \{ \Omega^i \omega \colon i \geq 0 \}$. Since we always have $\sann_R(\omega) \subseteq \arann_R(\omega)$, we can conclude that if $D\Omega \omega \in [\XX]_r $ for some $r$, then we have $\sqrt{\sann_R(\omega)} = \sqrt{\arann_R(\omega)}$.
    
        Let us now investivage when $D\Omega \omega \in [\XX]_r $ for some $r$, then. The canonical module is an injective cogenerator in the category $\MCM(R)$ and we can consider an injective coresolution
        \begin{align*}
            0 \to D\Omega\omega \to \omega^{s_0} \to \cdots \to \omega^{s_t} \to U \to 0 
        \end{align*}
        of $D\Omega \omega$ in $\MCM(R)$. Then, by \cite[Lemma 5.8]{Dao-Takahashi}, we have
        \begin{align*}
            D\Omega \omega \in \left| \Omega^{t+1} U \oplus \bigoplus_{i=0}^{t} \Omega^i \omega \right|_{t+2}.
        \end{align*}
        In particular, if $U = \Omega^j \omega$ for some $j \geq 1$, we have $D\Omega \omega \in |\XX|_{t+2}$. This condition is equivalent, by applying $D$, to asking that there is an exact sequence
        \begin{align*}
            0 \to D\Omega^j \omega \to R^{s_t} \to \ldots \to R^{s_0} \to \Omega \omega \to 0
        \end{align*}
        and this is equivalent to asking that $D\Omega^j \omega \cong \Omega^{t+2}\omega$. We record this discussion as a corollary.
        \begin{corollary}
            If there is a $j \geq 1$ such that $D\Omega^j \omega$ appears as a syzygy of the canonical module $\omega$, then $\sann_R(\omega)$ and $\arann_R(\omega)$ agree up to radicals.
        \end{corollary}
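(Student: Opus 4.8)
The plan is to read this off the discussion immediately preceding the statement, which already carries out the one genuinely substantive step. In outline: the hypothesis is used to place $D\Omega\omega$ in the $\ast$-ball $|\XX|_r$ for some $r\geq 1$, where $\XX = \{\Omega^i\omega \colon i\geq 0\}$; Lemma \ref{lemma-for-annihilators-from-balls} converts this into a power containment of annihilators of $\Ext^1$; a dimension shift identifies $\ann_R\Ext^1_R(\XX,R)$ with $\e(\omega,R)$, which contains $\arann_R(\omega)$; and the always-available inclusion $\sann_R(\omega)\subseteq\arann_R(\omega)$ closes the loop after passing to radicals.

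I would carry this out in the following order. First, recall from Lemma \ref{first-lemma-for-stable-annihilator}(1) that $\sann_R(\omega) = \ann_R\Ext^1_R(\omega,\Omega\omega)$, and rewrite this as $\sann_R(\omega) = \ann_R\Ext^1_R(D\Omega\omega,R)$ using that the canonical dual $D$ restricts to an $R$-linear exact duality on $\MCM(R)$ with $D\omega\cong R$, so that it preserves annihilators of $\Ext^1$ between maximal Cohen--Macaulay modules. Second, suppose $D\Omega^j\omega$ is, up to free summands, the $n$-th syzygy $\Omega^n\omega$ of $\omega$ for some $n\geq 1$. If $n\geq 2$, put $t = n-2$ and truncate an injective coresolution of $D\Omega\omega$ in $\MCM(R)$ to $0\to D\Omega\omega\to\omega^{s_0}\to\cdots\to\omega^{s_t}\to U\to 0$ with $U\in\MCM(R)$; dualizing shows $DU$ is a syzygy of $\omega$, namely $\Omega^{t+2}\omega = \Omega^n\omega\cong D\Omega^j\omega$, hence $U\cong\Omega^j\omega$ up to free summands, so \cite[Lemma 5.8]{Dao-Takahashi} gives $D\Omega\omega\in\bigl|\Omega^{t+1}U\oplus\bigoplus_{i=0}^{t}\Omega^i\omega\bigr|_{t+2}\subseteq|\XX|_{t+2}$; if $n=1$, then $D\Omega\omega\cong DD\Omega^j\omega\cong\Omega^j\omega\in\add\XX$, so in all cases $D\Omega\omega\in|\XX|_r$ for some $r\geq 1$. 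Third, apply Lemma \ref{lemma-for-annihilators-from-balls} with $Y=R$ to get $(\ann_R\Ext^1_R(\XX,R))^r\subseteq\ann_R\Ext^1_R(D\Omega\omega,R)=\sann_R(\omega)$. Fourth, use the dimension shift $\Ext^1_R(\Omega^i\omega,R)\cong\Ext^{i+1}_R(\omega,R)$ to compute $\ann_R\Ext^1_R(\XX,R) = \bigcap_{i\geq 0}\ann_R\Ext^1_R(\Omega^i\omega,R) = \bigcap_{i\geq 1}\ann_R\Ext^i_R(\omega,R) = \e(\omega,R)\supseteq\e(\omega,\omega\oplus R) = \arann_R(\omega)$. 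Fifth, combine these to get $\arann_R(\omega)^r\subseteq\sann_R(\omega)\subseteq\arann_R(\omega)$, the last inclusion by Proposition \ref{first-proposition-for-auslander-reiten-annihilators}(1), and take radicals to conclude $\sqrt{\sann_R(\omega)} = \sqrt{\arann_R(\omega)}$.

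The substantive content — membership of $D\Omega\omega$ in a $\ast$-ball over $\{\Omega^i\omega\}$ via the injective coresolution and \cite[Lemma 5.8]{Dao-Takahashi} — is essentially done in the text, so there is no real obstacle. The only points requiring a little care are: matching the hypothesis that $D\Omega^j\omega$ appears as a syzygy of $\omega$ to the precise input of that argument, namely that the cosyzygy term $U$ can be taken to be exactly $\Omega^j\omega$ up to free summands, together with the harmless degenerate case where the relevant syzygy index equals $1$; keeping everything inside $\MCM(R)$, where $D$ is a duality, which is automatic since syzygies of maximal Cohen--Macaulay modules over a Cohen--Macaulay local ring are again maximal Cohen--Macaulay; and noting that all the isomorphisms used are $R$-linear, so that they genuinely equate annihilator ideals.
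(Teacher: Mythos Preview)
Your proposal is correct and follows exactly the argument laid out in the discussion preceding the corollary: rewrite $\sann_R(\omega)$ as $\ann_R\Ext_R^1(D\Omega\omega,R)$ via canonical duality, use the hypothesis and \cite[Lemma 5.8]{Dao-Takahashi} to place $D\Omega\omega$ in $|\XX|_r$, and then apply Lemma~\ref{lemma-for-annihilators-from-balls} together with the dimension shift $\ann_R\Ext_R^1(\XX,R)=\e(\omega,R)\supseteq\arann_R(\omega)$ to obtain $\arann_R(\omega)^r\subseteq\sann_R(\omega)$. Your write-up is in fact slightly more careful than the paper's, in that you make the dimension shift explicit and you separately handle the degenerate case where $D\Omega^j\omega$ is the first syzygy of $\omega$.
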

        We give an example where the assumption holds.
        \begin{example}\label{Takahashi-example}
            Let $S = \CC[[x_1,\ldots, x_d]]$ be a formal power series ring and $R$ be the second Veronese subring of $S$. Assume $d$ is odd and at least $3$. Then, $R$ is a $d$-dimensional Cohen-Macaulay local ring and it is not Gorenstein \cite[Exercise 3.6.21]{Bruns-Herzog}. As an $R$-module, we have $S \cong R \oplus \omega$. The ring $R$ has an isolated singularity and for every maximal Cohen-Macaulay $R$-module $M$ there is an exact sequence 
            \begin{align*}
                0 \to M \to C^0 \to \cdots \to C^{d-2} \to 0
            \end{align*}
            with $C^i\in\add_R(S)$ for all $i$ \cite[2.2.3]{Iyama}. Since $D \Omega^j \omega$ is a maximal Cohen-Macaulay $R$-module, this gives us the desired result.
        \end{example}
\end{chunk}

\begin{chunk}
    \textbf{Higher dimension.}
We shall now recall some terminology and results from the Auslander-Reiten theory of Cohen-Macaulay local rings.
    
\begin{definition}
    Let $R$ be a commutative Noetherian local ring and $M$ be an $R$-module.
    \begin{enumerate}
        \item We say that $M$ is \textit{locally free on the punctured spectrum} if $M_p$ is a free $R_p$-module for all non-maximal prime ideals $p$.
        \item We say that $R$ is an \textit{isolated singularity} if $R_p$ has finite global dimension for every non-maximal ideal $p$.
        \item Assume $R$ is Cohen-Macaulay with canonical module $\omega$. We say that $R$ is \textit{Gorenstein on the punctured spectrum} if $\omega$ is locally free on the punctured spectrum.
    \end{enumerate}
\end{definition}
We include the following lemmas to keep the paper self-contained.
\begin{lemma}[Lemma 2.10 in \cite{Dao-Kobayashi-Takahashi}]\label{dkt-lemma-210}
    Let $R$ be a d-dimensional Cohen-Macaulay local ring with canonical module $\omega$. Let $M$ be an $R$-module of finite length. Then, we have 
    \begin{align*}
        \ann_R M = \ann_R \Ext_R^d(M, \omega_R).
    \end{align*}
\end{lemma}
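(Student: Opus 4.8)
The plan is to identify $\Ext_R^d(M,\omega)$ with the Matlis dual of $M$ and then use that Matlis duality preserves annihilators of finite length modules.

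First I would reduce to the case where $R$ is complete. Since $M$ has finite length it is already $\m$-adically complete, so $M\otimes_R\widehat R\cong M$; moreover $\widehat\omega:=\omega\otimes_R\widehat R$ is a canonical module of $\widehat R$, and $\Ext_R^d(M,\omega)\otimes_R\widehat R\cong\Ext_{\widehat R}^d(M,\widehat\omega)$ because $\widehat R$ is flat over $R$ and $M$ is finitely generated. For any finitely generated $R$-module $N$ one has $\ann_{\widehat R}(N\otimes_R\widehat R)=\ann_R(N)\widehat R$ and $\ann_R(N)\widehat R\cap R=\ann_R(N)$ by faithful flatness; hence it is enough to prove the asserted equality over $\widehat R$, i.e. we may assume $R$ is complete local Cohen--Macaulay of dimension $d$ with canonical module $\omega$.

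Next, let $E=E_R(R/\m)$ be the injective hull of the residue field and $(-)^\vee=\Hom_R(-,E)$ the Matlis duality functor. By the Local Duality Theorem \cite[\S3.5]{Bruns-Herzog} there are natural isomorphisms $\Ext_R^{d-i}(M,\omega)\cong H^i_\m(M)^\vee$ for all $i$. Since $M$ has finite length, $H^0_\m(M)=M$ and $H^i_\m(M)=0$ for $i>0$, so taking $i=0$ gives $\Ext_R^d(M,\omega)\cong M^\vee$ (and, as a byproduct, $\Ext_R^j(M,\omega)=0$ for $j<d$).

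Finally I would prove $\ann_R(M)=\ann_R(M^\vee)$. The inclusion $\ann_R(M)\subseteq\ann_R(M^\vee)$ is immediate: if $rM=0$ then for $\phi\in M^\vee$ and $m\in M$ we have $(r\phi)(m)=\phi(rm)=0$. Applying this inclusion to the finite length module $M^\vee$ yields $\ann_R(M^\vee)\subseteq\ann_R(M^{\vee\vee})$, and $M^{\vee\vee}\cong M$ by Matlis duality, giving the reverse inclusion. Combining the three steps, $\ann_R\Ext_R^d(M,\omega)=\ann_R(M^\vee)=\ann_R(M)$. The only mildly delicate point is checking that the reduction to $\widehat R$ is compatible with both the formation of $\Ext_R^d(M,\omega)$ and the passage to annihilators; once that is in place, the argument is a direct application of standard local duality and Matlis duality.
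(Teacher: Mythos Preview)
Your proof is correct. Note, however, that the paper does not actually prove this lemma: it is stated with attribution to \cite[Lemma 2.10]{Dao-Kobayashi-Takahashi} and used as a black box in the proof of Proposition~\ref{ar-lemma-proposition}. Your argument via local duality and Matlis duality is the standard one and is essentially what one finds in the cited source; the reduction to the complete case is a sensible precaution (and in fact can be avoided, since for a Cohen--Macaulay local ring with canonical module the isomorphism $\Ext_R^d(M,\omega)\cong M^\vee$ for finite length $M$ holds without completeness).
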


\begin{lemma}[Auslander-Reiten Duality. See, for instance, Lemma 3.10 in \cite{Yoshino}]\label{ar-duality}
    Let $R$ be a d-dimensional complete Cohen-Macaulay local ring with canonical module $\omega$ and $M, N$ be two maximal Cohen-Macaulay $R$-modules. If $N$ is locally free on the punctured spectrum, then we have an isomorphism
    \begin{align*}
        \Ext_R^d(\sHom_R(M,N),\omega) \cong \Ext_R^1(N,  D\Omega^d \Tr M).
    \end{align*}
\end{lemma}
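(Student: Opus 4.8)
\emph{Overview.} This is the classical Auslander--Reiten duality, and the plan is to reduce the left-hand side to a Matlis dual via local duality, then transport that Matlis dual through a chain of natural isomorphisms --- duality for maximal Cohen--Macaulay modules, the Matlis dual of a $\Tor$, and a local-cohomology collapse --- until it becomes the right-hand side. Write $(-)^\vee = \Hom_R(-,E_R(k))$ for the Matlis dual. Since $N$ is locally free on the punctured spectrum and stable Hom commutes with localization, for every non-maximal prime $p$ we have $\sHom_R(M,N)_p = \sHom_{R_p}(M_p,N_p) = 0$ (any homomorphism into a free module factors through a free module), so $\sHom_R(M,N)$ has finite length. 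Local duality over the complete Cohen--Macaulay ring $R$ with canonical module $\omega$ then gives $\Ext_R^i(T,\omega) = 0$ for $i \ne d$ and $\Ext_R^d(T,\omega) \cong T^\vee$ for every finite-length $T$, whence $\Ext_R^d(\sHom_R(M,N),\omega) \cong \sHom_R(M,N)^\vee$.

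\emph{Homological identification of $\sHom$.} I would then invoke the standard natural isomorphism $\sHom_R(M,N) \cong \Tor_1^R(\Tr M, N)$, obtained by analysing the start $P_0^* \to P_1^* \to \Tr M \to 0$ of a free resolution of $\Tr M$ coming from a free presentation $P_1 \to P_0 \to M \to 0$: its next differential is a free cover of $M^* = \Omega^2\Tr M$, and after tensoring with $N$ the kernel of the first differential is $\Hom_R(M,N)$ while the image of the second is the image of the evaluation map $M^* \otimes_R N \to \Hom_R(M,N)$, which is precisely $P(M,N)$. Applying the exact functor $\Hom_R(-,E_R(k))$ to a free resolution of $\Tr M$ tensored with $N$ then gives $\Tor_1^R(\Tr M,N)^\vee \cong \Ext_R^1(\Tr M, N^\vee)$. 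So it remains to prove $\Ext_R^1(\Tr M, N^\vee) \cong \Ext_R^1(N, D\Omega^d\Tr M)$.

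\emph{The core.} For this I would use maximal Cohen--Macaulayness of $N$ and completeness of $R$: local duality applied to the maximal Cohen--Macaulay module $DN$ gives $H^i_{\m}(DN) = 0$ for $i \ne d$ and $H^d_{\m}(DN) \cong N^\vee$, i.e. $\mathbf R\Gamma_{\m}(DN) \simeq N^\vee[-d]$ in the derived category. As $\Tr M$ is finitely generated, $\mathbf R\Gamma_{\m}$ commutes with $\mathbf R\Hom_R(\Tr M,-)$, so $\Ext_R^1(\Tr M, N^\vee) \cong \mathbb{H}^{d+1}_{\m}\bigl(\mathbf R\Hom_R(\Tr M, DN)\bigr)$. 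Now run the spectral sequence $E_2^{p,q} = H^p_{\m}\bigl(\Ext_R^q(\Tr M, DN)\bigr) \Rightarrow \mathbb{H}^{p+q}_{\m}\bigl(\mathbf R\Hom_R(\Tr M, DN)\bigr)$. The key estimate is that for a non-maximal prime $p$ one has $(DN)_p \cong \omega_p^{\,r}$ and $\operatorname{id}_{R_p}\omega_p = \dim R_p$, so $\Ext_R^q(\Tr M, DN)_p = 0$ whenever $q > \dim R_p$; since $R$ is catenary and equidimensional this forces $\dim \Ext_R^q(\Tr M, DN) \le d-q$ for $1 \le q < d$ and $\Ext_R^q(\Tr M, DN)$ of finite length for $q \ge d$. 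A degree count on the antidiagonal $p+q = d+1$ then kills every entry except $E_2^{0,d+1} = \Ext_R^{d+1}(\Tr M, DN)$, which admits no nonzero differentials in or out, so $\mathbb{H}^{d+1}_{\m}(\mathbf R\Hom_R(\Tr M, DN)) \cong \Ext_R^{d+1}(\Tr M, DN)$. Finally a $d$-fold syzygy shift gives $\Ext_R^{d+1}(\Tr M, DN) \cong \Ext_R^1(\Omega^d\Tr M, DN)$, and the adjunction $\mathbf R\Hom_R(L, \mathbf R\Hom_R(N,\omega)) \cong \mathbf R\Hom_R(N, \mathbf R\Hom_R(L,\omega))$ with $L = \Omega^d\Tr M$ maximal Cohen--Macaulay --- so that $\mathbf R\Hom_R(L,\omega) = DL$ and $\mathbf R\Hom_R(N,\omega) = DN$ sit in degree $0$ --- identifies this with $\Ext_R^1(N, D\Omega^d\Tr M)$, closing the chain.

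\emph{Main obstacle.} The step I expect to be the main obstacle is the spectral-sequence collapse, and the reason it is delicate is that only $N$ (not $M$) is assumed locally free on the punctured spectrum: the modules $\Ext_R^q(\Tr M, DN)$ are generally not of finite length for small $q$, only of dimension at most $d-q$, and it is exactly this dimension bound --- itself a consequence of $\operatorname{id}_{R_p}\omega_p = \dim R_p$ --- that makes the off-antidiagonal entries disappear and leaves the single term $\Ext_R^{d+1}(\Tr M, DN)$. One also uses $d \ge 1$ throughout, so that finite-length modules are detected in cohomological degree $d$ by $\Ext^{\bullet}(-,\omega)$ and have vanishing higher local cohomology.
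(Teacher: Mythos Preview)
The paper does not prove this lemma; it is quoted verbatim as Auslander--Reiten duality with a reference to Yoshino's book, so there is no in-paper proof to compare against. Your argument is correct and is essentially the classical one: reduce $\Ext_R^d(\sHom_R(M,N),\omega)$ to the Matlis dual of $\sHom_R(M,N)$ by local duality (using that $\sHom_R(M,N)$ has finite length), identify this with $\Tor_1^R(\Tr M,N)^\vee \cong \Ext_R^1(\Tr M, N^\vee)$, then use $\mathbf R\Gamma_{\m}(DN)\simeq N^\vee[-d]$ and the hypercohomology spectral sequence to land in $\Ext_R^{d+1}(\Tr M, DN)$, and finally apply the canonical duality swap $\Ext_R^1(\Omega^d\Tr M, DN)\cong\Ext_R^1(N, D\Omega^d\Tr M)$.

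Two small remarks. First, your dimension estimate $\dim\Ext_R^q(\Tr M, DN)\le d-q$ only uses that $DN$ is locally a direct sum of copies of $\omega_p$, hence of injective dimension $\dim R_p$; this is exactly where the hypothesis on $N$ enters, and your identification of this as the delicate point is apt. Second, in the collapse you should also note that the incoming differentials to $E_r^{0,d+1}$ vanish because they come from negative $p$-degree, and the outgoing ones land on the antidiagonal $p+q=d+2$, where the same dimension bound forces $H^{d+2-q}_{\m}(\Ext_R^q(\Tr M,DN))=0$ for $1\le q\le d$; you state the former but only sketch the latter. With that, the argument is complete.
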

Now, combining these two lemmas, we get the following proposition (\textit{c.f. \cite[Proposition 2.11.]{Dao-Kobayashi-Takahashi}})
\begin{proposition}\label{ar-lemma-proposition}
    Let $R$ be a Cohen-Macaulay local ring with canonical module $\omega$ and assume that it is an isolated singularity. Then, 
    \begin{align*}
        \ann_R \sHom_R(M,N) = \ann_R \Ext_R^{d+1}(\Tr M, DN)
    \end{align*}
    for any two maximal Cohen-Macaulay $R$-modules $M,N$.    
\end{proposition}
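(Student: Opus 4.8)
The plan is to route both ideals through $\ann_R\Ext_R^d(\sHom_R(M,N),\omega)$. On one side, since $R$ is an isolated singularity the module $\sHom_R(M,N)$ has finite length, so Lemma~\ref{dkt-lemma-210} recovers $\ann_R\sHom_R(M,N)$ from it. On the other side, Auslander--Reiten duality (Lemma~\ref{ar-duality}) together with the fact that $D=\Hom_R(-,\omega)$ is a duality on $\MCM(R)$ will turn $\Ext_R^d(\sHom_R(M,N),\omega)$ into $\Ext_R^{d+1}(\Tr M, DN)$.

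First I would check the finite-length claim: as $R$ is an isolated singularity, $R_p$ is regular for every non-maximal prime $p$, so the maximal Cohen--Macaulay modules $M_p$ and $N_p$ are free over $R_p$; hence $\sHom_R(M,N)_p=\sHom_{R_p}(M_p,N_p)=0$ and $\sHom_R(M,N)$ has finite length. Applying Lemma~\ref{dkt-lemma-210} to $L=\sHom_R(M,N)$ gives
\[
\ann_R\sHom_R(M,N)=\ann_R\Ext_R^d(\sHom_R(M,N),\omega).
\]

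Next I would reduce to the complete case. Completion is faithfully flat and commutes with $\Hom$, $\Ext$, syzygies, $\Tr$, $D$, and with the formation of $\sHom$ (the submodule of maps factoring through a free module base-changes to the analogous submodule), so every module in the statement base-changes to its $\widehat R$-analogue; since $\ann$ commutes with flat base change on finitely generated modules and $\widehat R$ is faithfully flat over $R$, an equality of two such annihilator ideals holds over $R$ precisely when it holds over $\widehat R$. So I may assume $R$ is complete, and the same regularity argument as above shows every maximal Cohen--Macaulay $R$-module is locally free on the punctured spectrum; Lemma~\ref{ar-duality} then applies and yields $\Ext_R^d(\sHom_R(M,N),\omega)\cong\Ext_R^1(N,D\Omega^d\Tr M)$. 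To finish, note that $\Omega^d\Tr M$ is maximal Cohen--Macaulay (a $d$-th syzygy over a $d$-dimensional Cohen--Macaulay ring has depth at least $d$), hence so is $D\Omega^d\Tr M$; applying the exact duality $D\colon\MCM(R)\to\MCM(R)$ with $D^2\cong\mathrm{id}$ (and using that $\MCM(R)$ is closed under extensions, so that these $\Ext^1$ groups are computed inside it) identifies $\Ext_R^1(N,D\Omega^d\Tr M)$ with $\Ext_R^1(\Omega^d\Tr M,DN)$, and a routine dimension shift $\Ext_R^1(\Omega^dX,-)\cong\Ext_R^{d+1}(X,-)$ gives $\Ext_R^1(\Omega^d\Tr M,DN)\cong\Ext_R^{d+1}(\Tr M,DN)$. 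Concatenating the isomorphisms and taking annihilators gives the claim.

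The step I expect to require the most care is the passage to $\widehat R$ in order to invoke Lemma~\ref{ar-duality}, which is stated only for complete rings: one must be sure the hypotheses of Auslander--Reiten duality genuinely survive completion. They do, essentially because what the duality uses is the finite length of the relevant stable $\Hom$ (equivalently, local freeness on the punctured spectrum), and that finite length is inherited from $R$. Everything else is bookkeeping with standard functorial identities.
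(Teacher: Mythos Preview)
Your proof is correct and follows essentially the same route as the paper: finite length of $\sHom_R(M,N)$ via the isolated singularity hypothesis, Lemma~\ref{dkt-lemma-210}, Auslander--Reiten duality (Lemma~\ref{ar-duality}), and then the canonical duality plus dimension shift to reach $\Ext_R^{d+1}(\Tr M, DN)$. The one difference is that you explicitly reduce to the complete case before invoking Lemma~\ref{ar-duality}, whereas the paper applies it directly; your extra care here is warranted, since the lemma is stated only for complete rings, and your justification via finite length surviving completion is the right way to handle it.
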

\begin{proof}
    Since $R$ is an isolated singularity, every maximal Cohen-Macaulay $R$-module is free on the punctured spectrum and for any two maximal Cohen-Macaulay $R$-modules, the module $\sHom_R(M,N)$ is of finite length. Therefore, we can apply Lemma \ref{dkt-lemma-210} to get 
    \begin{align*}
        \ann_R \sHom_R(M,N) = \ann_R \Ext_R^d(\sHom_R(M,N), \omega) 
    \end{align*}
    and we can apply Lemma \ref{ar-duality} to obtain
    \begin{align*}
        \ann_R \sHom_R(M,N) = \ann_R \Ext_R^1(N,  D\Omega^d \Tr M).
    \end{align*}
    By using the canonical duality, we know 
    \begin{align*}
        \Ext_R^1(N,  D\Omega^d \Tr M) \cong \Ext_R^1(\Omega^d \Tr M, DN) \cong \Ext_R^{d+1}(\Tr M, DN).
    \end{align*}
    Combining all, we obtain the desired result.
\end{proof}
The following is a consequence of this proposition for the $R$-dual of the canonical module.
\begin{corollary}\label{the-dual-of-the-canonical-module}
    Let $R$ be a Cohen-Macaulay local ring with canonical module $\omega$. Assume that the Krull dimension of $R$ is at least 2 and that $R$ is Gorenstein on the punctured spectrum. Then, we have $\omega^* \in \EE$. 
\end{corollary}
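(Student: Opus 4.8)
The inclusion $\sann_R(\omega^*)\subseteq\arann_R(\omega^*)$ always holds by Proposition \ref{first-proposition-for-auslander-reiten-annihilators}(1), so the plan is to produce the reverse inclusion. I would hunt for a single $\Ext$-module $E$ with two features at once: $\arann_R(\omega^*)$ annihilates $E$ merely by the definition of the Auslander--Reiten annihilator, while $\ann_R(E)$ can be identified with $\sann_R(\omega)$ by means of Proposition \ref{ar-lemma-proposition}. Since $\sann_R(\omega)\subseteq\sann_R(\omega^*)$ by Lemma \ref{first-lemma-for-stable-annihilator}(3), this would close the loop and force all the ideals in sight to coincide.

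The candidate is $E=\Ext_R^{d-1}(\omega^*,R)$, where $d=\dim R\geq 2$. Since $d-1\geq 1$, this module is one of the factors in the intersection defining $\e(\omega^*,R)$, so
\begin{align*}
\arann_R(\omega^*)=\e(\omega^*,\omega^*\oplus R)\subseteq\e(\omega^*,R)\subseteq\ann_R\Ext_R^{d-1}(\omega^*,R).
\end{align*}
Using $\omega^*=\Omega^2\Tr\omega$ up to projective summands (Definition \ref{general-definitions}(4)) there is an isomorphism $\Ext_R^{d-1}(\omega^*,R)\cong\Ext_R^{d+1}(\Tr\omega,R)$, valid because $d-1\geq 1$. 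On the other hand, applying Proposition \ref{ar-lemma-proposition} to $M=N=\omega$ and using $D\omega=\Hom_R(\omega,\omega)\cong R$ gives
\begin{align*}
\sann_R(\omega)=\ann_R\sEnd_R(\omega)=\ann_R\Ext_R^{d+1}(\Tr\omega,D\omega)=\ann_R\Ext_R^{d+1}(\Tr\omega,R).
\end{align*}
Combining these with Lemma \ref{first-lemma-for-stable-annihilator}(3), I obtain the chain
\begin{align*}
\arann_R(\omega^*)\subseteq\ann_R\Ext_R^{d-1}(\omega^*,R)=\sann_R(\omega)\subseteq\sann_R(\omega^*)\subseteq\arann_R(\omega^*),
\end{align*}
so every inclusion is an equality and $\omega^*\in\EE$.

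The step I expect to need the most care is the invocation of Proposition \ref{ar-lemma-proposition}, which was recorded under the stronger hypothesis that $R$ is an isolated singularity, whereas here we only assume $R$ is Gorenstein on the punctured spectrum, i.e.\ that $\omega$ is locally free on the punctured spectrum. I would argue that, for the pair $(\omega,\omega)$, the proof of Proposition \ref{ar-lemma-proposition} uses only that $\sEnd_R(\omega)$ has finite length and that its second argument (here $\omega$) is locally free on the punctured spectrum, so that Lemma \ref{dkt-lemma-210} and Auslander--Reiten duality (Lemma \ref{ar-duality}) still apply. Both conditions follow from $\omega$ being locally free on the punctured spectrum: for a non-maximal prime $p$ one has $\sEnd_R(\omega)_p\cong\sEnd_{R_p}(\omega_p)=0$ since $\omega_p$ is then free, so $\sEnd_R(\omega)$ is supported only at $\m$ and hence has finite length. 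The completeness assumption customarily attached to Auslander--Reiten duality is the only remaining technicality, and I would dispose of it in the usual way (passing to the completion, which preserves the relevant hypotheses, and noting that $\sann$ and $\arann$ are compatible with faithfully flat base change).
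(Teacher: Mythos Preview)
Your proof is correct and follows essentially the same route as the paper: the paper's argument is precisely the chain $\sann_R(\omega)=\ann_R\Ext_R^{d+1}(\Tr\omega,R)=\ann_R\Ext_R^{d-1}(\omega^*,R)\supseteq\arann_R(\omega^*)\supseteq\sann_R(\omega^*)\supseteq\sann_R(\omega)$, which is exactly what you wrote out. Your extra paragraph justifying why Proposition~\ref{ar-lemma-proposition} applies under the weaker hypothesis that $\omega$ is locally free on the punctured spectrum (rather than $R$ being an isolated singularity), and your remark on completeness, are points the paper leaves implicit; your treatment is in fact more careful there.
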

\begin{proof}
     By Proposition \ref{ar-lemma-proposition}, we have 
    \begin{align*}
        \sann_R(\omega) = \ann_R\Ext_R^{d+1}(\Tr \omega, R) = \ann_R\Ext_R^{d-1}(\omega^*, R) \supseteq \arann_R(\omega^*) \supseteq \sann_R(\omega^*) \supseteq \sann_R(\omega).
    \end{align*}
    Hence, we conclude that $\arann_R(\omega^*) = \sann_R(\omega^*) = \sann_R(\omega)$.
\end{proof}
For the next corollary, let us recall some more terminology.
\begin{definition}
    Let $R$ be a Cohen-Macaulay local ring with canonical module $\omega$ and $\Lambda$ be a module-finite $R$-algebra.
    \begin{enumerate}
        \item We say that $\Lambda$ is an $R$-\textit{order} if $\Lambda$ is maximal Cohen-Macaulay as an $R$-module.
        \item We say that a (right) $\Lambda$-module is \textit{maximal Cohen-Macaulay} if it is maximal Cohen-Macaulay as an $R$-module.
        \item The \textit{canonical module of} $\Lambda$ is the (bi-)module $\omega_\Lambda = \Hom_R(\Lambda, \omega)$.
        \item An order $\Lambda$ is called a \textit{Gorenstein order} if $\omega_\Lambda$ is a projective $\Lambda$-module.
    \end{enumerate}
\end{definition}
The next corollary and the proposition generalize \cite[Theorem 4.4]{Dao-Kobayashi-Takahashi}.
\begin{corollary}\label{Gorenstein-order-corollary}
    Let $R$ be Cohen-Macaulay local ring with canonical module $\omega$ of dimension at least $2$ and assume that it is and Gorenstein on the punctured spectrum and  Henselian. If the endomorphism ring $\End_R(R\oplus \omega)$ is a Gorenstein order, then we have $\omega \in \EE$.
\end{corollary}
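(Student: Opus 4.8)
One inclusion, $\sann_R(\omega)\subseteq\arann_R(\omega)$, is Proposition \ref{first-proposition-for-auslander-reiten-annihilators}(1), so the task is the reverse inclusion. First I would simplify the target: since $\Hom_R(-,\omega)$ is a duality on the exact category of maximal Cohen--Macaulay modules it preserves the $\Ext$ computed there, and (all syzygies of MCM modules being MCM) that $\Ext$ agrees with ordinary $\Ext$; hence $\Ext_R^i(\omega,\omega)\cong\Ext_R^i(R,R)=0$ for $i>0$, so that $\arann_R(\omega)=\bigcap_{i>0}\ann_R\Ext_R^i(\omega,R)$. Also recall $\sann_R(\omega)=\ann_R\sEnd_R(\omega)$, and that $\sEnd_R(\omega)$ has finite length because $\omega$ is locally free on the punctured spectrum. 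Now pass to the $R$-order $\Lambda=\End_R(M)$ with $M=R\oplus\omega$. The functor $G=\Hom_R(M,-)\colon\module R\to\module\Lambda$ is fully faithful (as $M$ is a generator), it identifies $\add M$ with $\proj\Lambda$, and since $\Hom_R(M,-)$ carries every $\add M$-approximation sequence $0\to\Omega_{\add M}Y\to M'\to Y\to 0$ to a short exact sequence, one gets $\Omega_\Lambda(GY)\doteq G(\Omega_{\add M}Y)$ up to projective summands. From a minimal free cover $R^b\xrightarrow{f_0}\omega$ one obtains a projective $\Lambda$-presentation $G(R^b)\xrightarrow{G(f_0)}G(\omega)\to\sEnd_R(\omega)\to 0$, so $\sEnd_R(\omega)$ is a finite-length $\Lambda$-module with $\Omega_\Lambda^2\sEnd_R(\omega)\doteq G(\Omega_R\omega)$.

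Next I would invoke the Gorenstein-order hypothesis. Over the $d$-dimensional Cohen--Macaulay order $\Lambda$ there is a local-duality statement $\ann_R T=\ann_R\Ext_\Lambda^d(T,\omega_\Lambda)$ for every finite-length $\Lambda$-module $T$: indeed $\mathbf{R}\Hom_\Lambda(T,\omega_\Lambda)\simeq\mathbf{R}\Hom_R(T,\omega)$ by the adjunction $\Hom_\Lambda(-,\Hom_R(\Lambda,\omega))\cong\Hom_R(-,\omega)$, so $\Ext_\Lambda^d(T,\omega_\Lambda)\cong\Ext_R^d(T,\omega)$ is the Matlis dual of $T$ by Lemma \ref{dkt-lemma-210}. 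Because $\Lambda$ is a Gorenstein order, $\add\omega_\Lambda=\add\Lambda$, hence $\sann_R(\omega)=\ann_R\sEnd_R(\omega)=\ann_R\Ext_\Lambda^d(\sEnd_R(\omega),\Lambda)$. It now suffices to show that $\Ext_\Lambda^d(\sEnd_R(\omega),\Lambda)$ is killed by $\bigcap_{i>0}\ann_R\Ext_R^i(\omega,R)$. Using $\Omega_\Lambda^2\sEnd_R(\omega)\doteq G(\Omega_R\omega)$, then $\Omega_\Lambda^{d-3}(G(\Omega_R\omega))\doteq G(Y)$ with $Y=\Omega_{\add M}^{d-3}(\Omega_R\omega)$ an MCM $R$-module, and finally full faithfulness of $G$ to replace $\Hom_\Lambda(G(-),\Lambda)$ by $\Hom_R(-,M)$, one rewrites $\Ext_\Lambda^d(\sEnd_R(\omega),\Lambda)$ in terms of $\Ext$ over $R$; the point is that the ``error terms'' that appear are $\Ext_R^j(\mathrm{MCM},\omega)=0$ and $\Ext_R^j(\omega,\omega)=0$ for $j>0$, so $\Ext_\Lambda^d(\sEnd_R(\omega),\Lambda)$ comes out as a subquotient of $\Ext_R^1(Y,R)$. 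When $d=2$ this is completely clean: the short exact sequence $0\to G(\Omega_R\omega)\to G(R^b)\to\Omega_\Lambda^1\sEnd_R(\omega)\to 0$ gives $\Ext_\Lambda^2(\sEnd_R(\omega),\Lambda)=\coker\!\big(\Hom_R(R^b,M)\to\Hom_R(\Omega_R\omega,M)\big)=\Ext_R^1(\omega,M)=\Ext_R^1(\omega,R)$, which is visibly annihilated by $\arann_R(\omega)$, so $\arann_R(\omega)\subseteq\ann_R\Ext_R^1(\omega,R)=\sann_R(\omega)$.

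I expect the main obstacle to be making this last identification sharp enough in Krull dimension $d\geq 3$. There the reduction to $R$ passes through $d-2$ iterated $\add M$-syzygies, and the naive bookkeeping only yields $\arann_R(\omega)^{\,d-2}\subseteq\sann_R(\omega)$ — an inclusion up to radicals rather than an equality. To close this gap I would combine it with the fact, available under ``Gorenstein on the punctured spectrum'' exactly as in the proof of Corollary \ref{the-dual-of-the-canonical-module} (where it is shown that $\omega^*\in\EE$ and $\sann_R(\omega)=\ann_R\Ext_R^{d-1}(\omega^*,R)$), that $R$ is in particular generically Gorenstein, so $\sqrt{\arann_R(\omega)}=\sqrt{\sann_R(\omega)}$; one then has to argue that the $\Lambda$-module $\Ext_\Lambda^d(\sEnd_R(\omega),\Lambda)$ is in fact a subquotient of a \emph{single} $\Ext_R^{d-1}(\omega,R)$ (equivalently, that $\Ext_R^{d-1}(\omega^*,R)$ is a subquotient of $\Ext_R^{d-1}(\omega,R)$ via the order structure), which upgrades the radical statement to the desired equality. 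The Henselian hypothesis is what is needed to make the structure of $\Lambda$ and its indecomposable projectives $G(R)$, $G(\omega)$ transparent (Krull--Remak--Schmidt), the Gorenstein-order hypothesis is precisely what gives $\add\omega_\Lambda=\add\Lambda$ and hence the clean local-duality step, and $d\geq 2$ is used both to guarantee $\Ext_R^1(\mathrm{MCM},\omega)=0$-type vanishing is enough and so that $\Hom_R(M,-)$ of a first syzygy has depth $\geq 2$.
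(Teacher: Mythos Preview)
Your approach is substantially more elaborate than the paper's and, as you yourself flag, it has a real gap for $d\geq 3$ that your final paragraph does not close. The paper's proof is a two-line reduction: by a result of Stangle, the hypothesis that $\End_R(R\oplus\omega)$ is a Gorenstein order (together with $R$ Henselian) forces an isomorphism $\omega^*\cong\omega$. Once you know this, Corollary~\ref{the-dual-of-the-canonical-module} --- which you already cite in passing --- gives $\omega\cong\omega^*\in\EE$ immediately, in every dimension $d\geq 2$.

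The missing idea in your argument is precisely this isomorphism $\omega\cong\omega^*$. You invoke the Gorenstein-order hypothesis only through the equality $\add\omega_\Lambda=\add\Lambda$, and from that you extract the local-duality identity $\ann_R T=\ann_R\Ext_\Lambda^d(T,\Lambda)$. This is enough for your clean $d=2$ computation (which is correct), but for $d\geq 3$ your iterated $\add M$-syzygy bookkeeping only yields $\arann_R(\omega)^{d-2}\subseteq\sann_R(\omega)$, and your proposed upgrade --- that $\Ext_R^{d-1}(\omega^*,R)$ is a subquotient of $\Ext_R^{d-1}(\omega,R)$ ``via the order structure'' --- is not an argument, just a hope. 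The point is that the stronger consequence $\omega^*\cong\omega$ of the Gorenstein-order hypothesis makes that subquotient statement a tautology: then $\ann_R\Ext_R^{d-1}(\omega^*,R)=\ann_R\Ext_R^{d-1}(\omega,R)\supseteq\arann_R(\omega)$, and the chain in Corollary~\ref{the-dual-of-the-canonical-module} finishes the job. So the fix is not to push harder on the $\Lambda$-module computation, but to use what the hypothesis actually buys you at the level of $R$.
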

\begin{proof}
    By \cite[Corollary 4.6]{Stangle}, with our assumptions, we have $\omega^* \cong \omega$ whenever the endomorphism ring $\End_R(R \oplus \omega)$ is a Gorenstein order. Therefore, we have equalities  $\sann_R(\omega^*) = \sann_R(\omega)$ and $\arann_R(\omega) = \arann_R(\omega^*)$. Therefore Corollary \ref{the-dual-of-the-canonical-module} implies that $\omega \in \EE$.
\end{proof}
\begin{remark}
    When $R$ is a Cohen-Macaulay local domain, the condition $\omega^* \cong \omega$ is equivalent to saying that the order of $[\omega]$ in the divisor class group is $2$. We refer to \cite{Weston} for examples and computations regarding the order of $[\omega]$ in the divisor class group.
\end{remark}
Next, we give a consequence for Cohen-Macaulay local rings of type 2 which are Gorenstein on the punctured spectrum.
\begin{theorem}\label{type-2-theorem}
    Let $R$ be a Cohen-Macaulay local ring of type $2$ and assume that it is Gorenstein on the punctured spectrum. Then, we have $\omega \in \EE$.
\end{theorem}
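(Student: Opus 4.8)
Write $d=\dim R$ and, as in part (3) of the introduction, assume $d\ge 2$; the hypothesis that $R$ is Gorenstein on the punctured spectrum makes $\omega$ locally free there, and in particular forces $R$ to be generically Gorenstein (since $d\ge 1$). Since $\sann_R(\omega)\subseteq\arann_R(\omega)$ always holds (Proposition \ref{first-proposition-for-auslander-reiten-annihilators}(1)), it is enough to prove the reverse inclusion $\arann_R(\omega)\subseteq\sann_R(\omega)$. From the computation in the proof of Corollary \ref{the-dual-of-the-canonical-module} we already have, under these hypotheses,
\begin{align*}
\sann_R(\omega)=\ann_R\Ext_R^{d-1}(\omega^{*},R),
\end{align*}
so the theorem comes down to showing that $\arann_R(\omega)$ annihilates $\Ext_R^{d-1}(\omega^{*},R)$.

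\textbf{Where type $2$ enters: $\omega^{*}\cong\Omega\omega$.} The plan is to prove the structural identity $\omega^{*}\cong\Omega\omega$ (this is exactly the place where $\operatorname{type}(R)=2$ is used). First, since $R$ is generically Gorenstein and $\omega$ is a faithful maximal Cohen--Macaulay module, $\omega$ is isomorphic to a fractional ideal $J\subseteq Q(R)$ containing a nonzerodivisor of $R$; and because $\mu(\omega)=\operatorname{type}(R)=2$, after the harmless flat base change $R\rightsquigarrow R[t]_{\m R[t]}$ (which preserves $d$, the type, the Gorenstein-on-the-punctured-spectrum property, and module isomorphism classes) we may choose a minimal generating set $\{u,v\}$ of $J$ with $u$ a nonzerodivisor. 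Then the projective cover $R^{2}\to J$, $(x,y)\mapsto xu+yv$, has kernel $\Omega J$, and the composite $\Omega J\hookrightarrow R^{2}\xrightarrow{(x,y)\mapsto y}R$ is injective (if $xu+yv=0$ and $y=0$ then $xu=0$, so $x=0$) with image $(uR:_{R}v)$; hence $\Omega J\cong(uR:_{R}v)$. Multiplying by $u^{-1}\in Q(R)$ yields an isomorphism of $R$-modules
$$\Omega J\cong u^{-1}(uR:_{R}v)=\{\,z\in Q(R):zu\in R,\ zv\in R\,\}=(R:_{Q(R)}J)\cong\Hom_R(J,R)=\omega^{*},$$
so $\omega^{*}\cong\Omega\omega$ up to free summands.

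\textbf{Conclusion.} Granting the identity, dimension shifting along $0\to\Omega\omega\to R^{2}\to\omega\to 0$ (legitimate since $d-1\ge 1$, and $\Ext_R^{\ge 1}$ does not see free summands) gives
\begin{align*}
\Ext_R^{d-1}(\omega^{*},R)\cong\Ext_R^{d-1}(\Omega\omega,R)\cong\Ext_R^{d}(\omega,R),
\end{align*}
and by definition $\arann_R(\omega)\subseteq\e(\omega,R)\subseteq\ann_R\Ext_R^{d}(\omega,R)$. Therefore $\arann_R(\omega)\subseteq\ann_R\Ext_R^{d-1}(\omega^{*},R)=\sann_R(\omega)$, which combined with the always-valid reverse inclusion gives $\arann_R(\omega)=\sann_R(\omega)$, i.e.\ $\omega\in\EE(R)$.

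\textbf{Expected difficulty.} The substantive step is the middle paragraph: realizing $\omega$ as a fractional ideal with a nonzerodivisor among a minimal pair of generators, which is precisely what the type-$2$ hypothesis buys and which needs generic Gorensteinness (hence $d\ge 1$). Everything else is bookkeeping with dimension shifting, together with keeping careful track of the fact that the opening display for $\sann_R(\omega)$ genuinely requires $d\ge 2$ (so the case $d\le 1$ must be excluded or argued separately, e.g.\ via $\m\subseteq\sann_R(\omega)=\tr(\omega)$ when that holds).
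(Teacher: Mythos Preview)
Your proof is correct and follows essentially the same route as the paper's. Both arguments reduce to the identity $\sann_R(\omega)=\ann_R\Ext_R^{d}(\omega,R)$, obtained by combining the Auslander--Reiten duality computation $\sann_R(\omega)=\ann_R\Ext_R^{d+1}(\Tr\omega,R)$ with a shift coming from the type-$2$ hypothesis; the only difference is that the paper cites \cite[Lemma 4.1]{Dao-Kobayashi-Takahashi} for the isomorphism $\Ext_R^{i}(\omega,R)\cong\Ext_R^{i+1}(\Tr\omega,R)$, whereas you unwind that citation by proving the module-level statement $\omega^{*}\cong\Omega\omega$ directly via the two-generator fractional-ideal computation (with the standard infinite-residue-field base change). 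Your added assumption $d\ge 2$ matches the hypothesis the paper states for this result in Theorem~C of the introduction.
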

\begin{proof}
    The key point is that if $R$ is generically Gorenstein and of type $2$, then we have isomorphisms $\Ext_R^i(\omega, R) \cong \Ext_R^{i+1}(\Tr \omega, R)$ for all $i > 0$ by \cite[Lemma 4.1]{Dao-Kobayashi-Takahashi}. Then, as before, we have
    \begin{align*}
        \sann_R(\omega) = \ann_R\Ext_R^{d+1}(\Tr \omega, R) = \ann_R \Ext_R^d(\omega, R) \supseteq \arann_R(\omega) \supseteq \sann_R(\omega) 
    \end{align*}
    which gives us the desired equality.
\end{proof}
Finally, we have the main theorem of this section.
\begin{theorem}
    Let $R$ be a Gorenstein local ring with Krull dimension $d$ at least 2. Then, for any maximal Cohen-Macaulay $R$-module $M$ which is free on the punctured spectrum, we have $M \in \EE$.
\end{theorem}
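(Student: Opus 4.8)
The plan is to reduce to the complete case and then run the Auslander--Reiten duality computation behind Proposition~\ref{ar-lemma-proposition} in the Gorenstein setting, where $D=(-)^*$ is an honest duality on $\MCM(R)$. First I would pass to the completion: the hypothesis that $M$ is free on the punctured spectrum is equivalent to $\sEnd_R(M)$ having finite length, hence to $\supp_R \sEnd_R(M)\subseteq\{\m\}$, and this is preserved by the faithfully flat map $R\to\widehat R$ (since $\sHom$ of finitely generated modules commutes with flat base change, being the cokernel of $M^*\otimes_R M\to\Hom_R(M,M)$). Thus $\widehat M$ is maximal Cohen--Macaulay and free on the punctured spectrum of the complete Gorenstein ring $\widehat R$, which still has dimension $d\ge 2$. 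Because $\sHom_R$ and $\Ext_R^i$ of finitely generated modules base-change to $\widehat R$ and faithful flatness descends equalities of ideals, it suffices to prove $\sann_R(M)=\ann_R\Ext_R^{d-1}(M,M)$ over the complete ring; granting this, the theorem follows at once from $\arann_R(M)=\bigcap_{i>0}\ann_R\Ext_R^i(M,M\oplus R)\subseteq\ann_R\Ext_R^{d-1}(M,M)=\sann_R(M)\subseteq\arann_R(M)$, using $d-1\ge 1$ and Proposition~\ref{first-proposition-for-auslander-reiten-annihilators}(1).

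So assume $R$ is complete Gorenstein, $\omega_R=R$, $D=(-)^*$. The key input is a pointwise version of Proposition~\ref{ar-lemma-proposition}: its proof only uses that $\sHom_R(M,N)$ has finite length (to apply Lemma~\ref{dkt-lemma-210}) and that $N$ is free on the punctured spectrum (to apply Lemma~\ref{ar-duality}), so for maximal Cohen--Macaulay $M,N$ with $N$ free on the punctured spectrum one still gets $\ann_R\sHom_R(M,N)=\ann_R\Ext_R^{d+1}(\Tr M,N^*)$. Taking $M=N$ yields $\sann_R(M)=\ann_R\Ext_R^{d+1}(\Tr M,M^*)$, and the isomorphism $\Ext_R^{i+2}(\Tr M,-)\cong\Ext_R^i(M^*,-)$ of Definition~\ref{general-definitions}(4), applied with $i=d-1\ge 1$, converts this to $\sann_R(M)=\ann_R\Ext_R^{d-1}(M^*,M^*)$.

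To replace $M^*$ by $M$ I would invoke that $M$ is totally reflexive over the Gorenstein ring $R$, so $M^{**}\cong M$, and that $M^*$ is itself maximal Cohen--Macaulay and free on the punctured spectrum (as $(M^*)_\mathfrak{p}\cong(M_\mathfrak{p})^*$). Applying the displayed formula of the previous paragraph to $M^*$ in place of $M$ gives $\sann_R(M^*)=\ann_R\Ext_R^{d-1}(M^{**},M^{**})=\ann_R\Ext_R^{d-1}(M,M)$, while Lemma~\ref{first-lemma-for-stable-annihilator}(3) gives $\sann_R(M)=\sann_R(M^*)$ because $M$ is reflexive. Hence $\sann_R(M)=\ann_R\Ext_R^{d-1}(M,M)$ over the complete ring, which is what the reduction step required.

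I expect the main obstacle to be careful bookkeeping rather than a genuine difficulty. One must verify that the proof of Proposition~\ref{ar-lemma-proposition} really does go through with ``$M$ free on the punctured spectrum'' in place of ``$R$ an isolated singularity'' — the only point being that this still forces $\sHom_R(M,M)$ to have finite length — and one must keep track of the degree shifts, since the hypothesis $d\ge2$ is used twice: once so that $i=d-1\ge1$ is permitted in Definition~\ref{general-definitions}(4), and once so that $\Ext_R^{d-1}(M,M)$ genuinely appears among the modules defining $\arann_R(M)$. The completion reduction is routine but cannot be skipped, because Auslander--Reiten duality (Lemma~\ref{ar-duality}) is only stated for complete rings; it is worth recording explicitly that ``free on the punctured spectrum'' and both annihilator ideals are insensitive to completion.
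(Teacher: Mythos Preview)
Your proposal is correct and follows essentially the same route as the paper: both establish $\sann_R(M)=\ann_R\Ext_R^{d-1}(M,M)$ via Auslander--Reiten duality together with the shift $\Ext_R^{i+2}(\Tr M,-)\cong\Ext_R^i(M^*,-)$, and then conclude from $\arann_R(M)\subseteq\ann_R\Ext_R^{d-1}(M,M)$. The paper passes from $\Ext_R^{d-1}(M^*,M^*)$ to $\Ext_R^{d-1}(M,M)$ by invoking the exact duality $(-)^*$ on $\MCM(R)$ directly (rather than reapplying the formula to $M^*$ and using $\sann_R(M)=\sann_R(M^*)$), and is less explicit than you about the completion step and the pointwise form of Proposition~\ref{ar-lemma-proposition}.
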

\begin{proof}
    Let $M$ be a maximal Cohen-Macaulay $R$-module which is free on the punctured spectrum. Then, we have 
    \begin{align*}
        \sann_R(M) = \ann_R \Ext_R^{d+1}(\Tr M, M^*) = \ann_R \Ext_R^{d-1}(M^*,M^*)
    \end{align*}
    where the first equality is by Proposition \ref{ar-lemma-proposition}. We also have 
    \begin{align*}
        \ann_R \Ext_R^{d-1}(M^*,M^*) = \ann_R \Ext_R^{d-1}(M,M) \supseteq \arann_R(M)
    \end{align*}
     where the equality is due to the fact that $(-)^*$ is an exact duality on maximal Cohen-Macaulay modules. By definition, we have $\arann_R(M) \subseteq \ann_R\Ext_R^{d-1}(M,M)$ and combining we get $\arann_R(M) \subseteq \sann_R(M)$ which proves the equality.
\end{proof}
Since every maximal Cohen-Macaulay module is free on the punctured spectrum over an isolated singularity, we have the following corollary.
\begin{corollary}
    Let $R$ be a Gorenstein local ring with Krull dimension $d$ at least $2$. Assume that $R$ has at most an isolated singularity. Then, we have
    \begin{align*}
        \MCM(R) = \Omega^d(\module R) \subseteq \EE.
    \end{align*}
\end{corollary}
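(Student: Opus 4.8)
The plan is to read this off from the preceding theorem together with two standard facts about Gorenstein rings, so there is essentially no new content to establish.

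First I would prove the identity $\MCM(R)=\Omega^d(\module R)$ (understood, as always, up to free summands). For the inclusion $\Omega^d(\module R)\subseteq\MCM(R)$ I would run the depth lemma along the syzygy sequences $0\to\Omega^{i}N\to F\to\Omega^{i-1}N\to 0$ with $F$ free: since $\depth R=d$, an easy induction gives $\depth\Omega^{i}N\geq\min\{i,d\}$ for every finitely generated $N$, so $\Omega^{d}N$ is maximal Cohen--Macaulay (the zero module vacuously). For the reverse inclusion I would use that $R$ is Gorenstein of dimension $d$: then every maximal Cohen--Macaulay $M$ is totally reflexive and admits a complete resolution, and if $\Omega^{-d}M$ denotes its $d$-th cosyzygy we get $M\cong\Omega^{d}(\Omega^{-d}M)$ up to free summands, whence $M\in\Omega^{d}(\module R)$.

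Next I would invoke the fact already recorded just before the statement: over an isolated singularity every maximal Cohen--Macaulay module is free on the punctured spectrum. Indeed, for a non-maximal prime $p$ the ring $R_p$ has finite global dimension and is therefore regular, while $M_p$ is maximal Cohen--Macaulay over $R_p$ (as $R$ is Gorenstein, $\Ext_{R}^{i}(M,R)=0$ for $i>0$, and this localizes); the Auslander--Buchsbaum formula then forces $\pd M_p=0$, so $M_p$ is free.

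Combining the two: by the second fact every $M\in\MCM(R)$ is free on the punctured spectrum, so the preceding theorem applies and gives $M\in\EE$; together with the first fact this yields $\Omega^{d}(\module R)=\MCM(R)\subseteq\EE$. I do not expect any genuine obstacle here --- the only points needing care are reading the identity $\MCM(R)=\Omega^d(\module R)$ up to free summands (so that syzygies and cosyzygies are unambiguous) and the degenerate case in which $R$ is regular, where $\MCM(R)$, $\Omega^d(\module R)$ and the relevant portion of $\EE$ all collapse to the class of free modules and every inclusion is trivial.
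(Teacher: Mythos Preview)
Your proposal is correct and follows essentially the same route as the paper: both reduce immediately to the preceding theorem by noting that over an isolated singularity every maximal Cohen--Macaulay module is free on the punctured spectrum. The paper treats the equality $\MCM(R)=\Omega^{d}(\module R)$ and the freeness on the punctured spectrum as known and gives a one-line proof, whereas you supply the standard justifications for both; there is no genuine difference in strategy.
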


\end{chunk}

\section{Acknowledgments}

Many thanks to the anonymous referee for a careful reading of the paper and their valuable suggestions. I would like to thank Eleonore Faber for her detailed reading of a first draft of this paper. Also a thank you to Graham Leuschke and Benjamin Briggs for their comments and Ryo Takahashi for pointing out Example \ref{Takahashi-example}. This research did not receive any specific grant from funding agencies in the public, commercial, or not-for-profit sectors. The author has no competing interests to declare that are relevant to the content of this article.

\bibliographystyle{alpha}
\bibliography{ar-annihilators}

\end{document}